\setlist{topsep=3pt,partopsep=0pt,itemsep=1pt,parsep=0pt}
\numberwithin{equation}{section}
\newtheorem{theorem}{Theorem}
\newtheorem{lemma}[theorem]{Lemma}
\newtheorem{conjecture}[theorem]{Conjecture}
\theoremstyle{definition}
\newtheorem{definition}[theorem]{Definition}
\newtheorem{construction}[theorem]{Construction}
\renewcommand{\geq}{\geqslant}
\renewcommand{\leq}{\leqslant}
\renewcommand{\ge}{\geqslant}
\renewcommand{\le}{\leqslant}
\def\lref#1{Lemma~$\ref{#1}$}
\def\tref#1{Theorem~$\ref{#1}$}
\def \Cay {{\rm Cay}}
\def \Z {\mathbb{Z}}
\def \B {\mathcal{B}}
\def \mod#1{{\:({\rm mod}\ #1)}}
\def \wfrac#1#2{(#1)/#2}
\def \wwfrac#1#2{#1/#2}
\let\oldproofname=\proofname
\renewcommand{\proofname}{\rm\bf{\oldproofname}}
\author{Darryn Bryant\thanks{School of Mathematics and Physics, The University of Queensland, Qld,
Australia}, Charles J. Colbourn\thanks{School of Computing, Informatics, and Decision Systems Engineering, Arizona State University, Tempe AZ, USA}, Daniel Horsley\thanks{School of Mathematical Sciences, Monash University, Vic, Australia}, Ian M. Wanless\footnotemark[3]}
\title{\bf Steiner triple systems with high chromatic index}
\date{}
\begin{document}
\def\baselinestretch{1.2}\small\normalsize
\maketitle

\begin{abstract}
It has been conjectured that every Steiner triple system of order $v \neq 7$ has chromatic index at most $\wfrac{v+3}{2}$ when $v \equiv 3 \mod{6}$ and at most $\wfrac{v+5}{2}$ when $v \equiv 1 \mod{6}$. Herein, we construct a Steiner triple system of order $v$ with chromatic index at least $\wfrac{v+3}{2}$ for each integer $v \equiv 3 \mod{6}$ such that $v \geq 15$, with four possible exceptions. We further show that the maximum number of disjoint parallel classes in the systems constructed is sublinear in $v$. Finally, we establish for each order $v \equiv 15 \mod{18}$ that there are at least $v^{v^2(1/6+o(1))}$ non-isomorphic Steiner triple systems with chromatic index at least $\wfrac{v+3}{2}$ and that some of these systems are cyclic.
\end{abstract}

\section{Introduction}

A \emph{Steiner triple system} of order $v$ is a pair $(V,\B)$ where $V$ is a $v$-set of \emph{points} and $\B$ is a collection of $3$-subsets of $V$, called {\em triples}, such that any two points occur together in exactly one triple. Steiner triple systems are fundamental in design theory. Kirkman \cite{Kir} proved in 1847 that there exists a Steiner triple system of order $v$ if and only if $v\equiv 1$ or $3\mod 6$ (see \cite{ColRos}).

A \emph{partial parallel class} in a Steiner triple system of order $v$ is a subset of its triples that are pairwise disjoint, a \emph{parallel class} is a partial parallel class with $\wwfrac{v}{3}$ triples (when $v \equiv 3 \mod{6}$), and an \emph{almost parallel class} is a partial parallel class with $\wfrac{v-1}{3}$ triples (when $v \equiv 1 \mod{6}$). The \emph{chromatic index} of a Steiner triple system $(V,\B)$ is the smallest number of
partial parallel classes into which $\B$ can be partitioned. Equivalently, this is the smallest number of colours needed to colour the triples in $\B$ so that no two triples that share a point are assigned the same colour. By observing that any partial parallel class in such a system contains at most $\lfloor\wwfrac{v}{3}\rfloor$ triples, it can be seen that the chromatic index of any Steiner triple system of order $v$ is at least
$$m(v)=
\left\{
  \begin{array}{ll}
    \tfrac{v-1}{2} & \hbox{if $v \equiv 3 \mod{6}$,} \\[3pt]
    \tfrac{v+1}{2} & \hbox{if $v \equiv 1 \mod{6}$.}
  \end{array}
\right.$$
Rosa \cite[p.30]{Ros} has conjectured that the chromatic index of any Steiner triple system of order $v \neq 7$ is in $\{m(v),m(v)+1,m(v)+2\}$. A Steiner triple system of order $v$ with chromatic index $m(v)$ is known \cite{RayWil,VanEtAl} to exist for each $v \equiv 1,3 \mod{6}$ except $7$ and $13$. A result of Pippenger and Spencer \cite{PipSpe} on edge colourings of hypergraphs implies that the maximum chromatic index over the Steiner triple systems of order $v$ is $m(v)+o(v)$ as $v\rightarrow\infty$. Sparse families of Steiner triple systems without parallel classes or almost parallel classes have been constructed \cite{BryHorNoAPC,BryHorNoPC,RosCol}, and any such system of order $v$ has chromatic index at least $m(v)+2$. Aside from this, little seems to be known about Steiner triple systems with few parallel classes or high chromatic index.

In this paper we prove four main results concerning Steiner triple systems of orders congruent to 3 modulo 6 that have few parallel classes or high chromatic index. A simple counting argument shows that a Steiner triple system of order $v \equiv 3 \mod{6}$ with chromatic index at most $\wfrac{v+1}{2}$ must have at least $\wfrac{v+3}{6}$ disjoint parallel classes.

Firstly, we construct a Steiner triple system of order $v$ with chromatic index at least $m(v)+2=\wfrac{v+3}{2}$ for each positive integer $v \equiv 3 \mod{6}$ such that $v \geq 15$, with four possible exceptions (the unique systems of order $3$ and $9$ each have chromatic index $m(v)$). We show that the systems we construct have chromatic index at least $\wfrac{v+3}{2}$ by demonstrating that they contain at most $\wfrac{v-3}{6}$ disjoint parallel classes. In contrast to the $v \equiv 3 \mod{6}$ case, to show that a Steiner triple system of order $v \equiv 1 \mod{6}$ has chromatic index at least $m(v)+2$ by proving an upper bound on the number of disjoint almost parallel classes would require showing that the number of such classes is at most one. Few examples of Steiner triple systems with this property are known and constructing them seems to be a difficult task.

\begin{theorem}\label{mainthm}
For all positive $v\equiv 3\mod 6$ there is a Steiner triple system of
order $v$ having chromatic index at least $\wfrac{v+3}{2}$ except when
$v\in\{3,9\}$ and except possibly when $v\in \{45,75,129,513\}$.
\end{theorem}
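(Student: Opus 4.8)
The plan is to construct, for each admissible order $v$, a Steiner triple system that provably contains at most $\wfrac{v-3}{6}$ pairwise disjoint parallel classes; by the counting argument noted in the introduction (a system of order $v\equiv 3\mod 6$ with chromatic index at most $\wfrac{v+1}{2}$ must have at least $\wfrac{v+3}{6}$ disjoint parallel classes), this immediately forces chromatic index at least $\wfrac{v+3}{2}$. So the entire task reduces to: build a system whose every parallel class is forced to use up a scarce ``resource'', and bound the supply of that resource by $\wfrac{v-3}{6}$.

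The approach I would take is to engineer a small configuration inside the system that acts as a bottleneck. Concretely, I would take a subset $W\subseteq V$ of points and arrange the triples so that (i) the number of triples contained in $W$, or meeting $W$ in a prescribed way, is small — on the order of $\wfrac{v-3}{6}$ — yet (ii) every parallel class of the whole system is forced to contain at least one such triple. Point (ii) would hold if, say, there is a single point $\infty$ (or a small fixed set) such that the triple through $\infty$ in any parallel class is constrained to lie in a short list; more robustly, one picks $W$ so that $|W|$ is odd and no spanning structure on $V\setminus(\text{something})$ can cover $W$ without using a $W$-triple. The cleanest incarnation: ensure every parallel class must contain exactly one triple from a fixed set $T$ of size $\wfrac{v-3}{6}$ (for instance the triples through a distinguished point that are ``allowed'' by a parity or divisibility obstruction), so disjoint parallel classes are in bijection with a system of disjoint representatives from $T$, of which there are at most $|T|=\wfrac{v-3}{6}$.

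For the construction itself I would use a recursive or product-type method: start from a small ``seed'' system of order $v_0$ (say $v_0\in\{15,21,\dots\}$) that has the bottleneck property, then inflate it to larger orders using a standard tripling construction ($v\mapsto 3v$, or the $v\mapsto 2v+1$ and $v\mapsto 2v+7$ variants, or a Wilson-type $3\times$ product) in a way that preserves or controls the bottleneck. The seed cases would be handled by explicit construction (or computer search), and the recursion would propagate the bound; the four exceptional values $v\in\{45,75,129,513\}$ and the genuine exceptions $v\in\{3,9\}$ are presumably exactly the orders not reachable by the recursion from available seeds — e.g. $45=3\cdot 15$, $75=3\cdot 25$, $129$, $513=3^3\cdot 19$ fall outside the arithmetic progressions the recursion covers, or the needed seed of that order lacks the property. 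I would organize the proof as: (1) a lemma giving the bottleneck $\Rightarrow$ few-parallel-classes $\Rightarrow$ high chromatic index implication; (2) a base-case lemma exhibiting seed systems; (3) a recursion lemma showing the product/tripling preserves ``at most $\wfrac{v-3}{6}$ disjoint parallel classes''; (4) a number-theoretic bookkeeping step verifying that every $v\equiv 3\mod 6$, $v\ge 15$, except the listed ones, is covered.

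The main obstacle will be step (3): showing the bottleneck is preserved under the recursive construction. Parallel classes of a $3\times$-blown-up design can mix triples in intricate ways — a parallel class of the big system need not be a ``lift'' of a parallel class of the seed — so one must argue that regardless of how a global parallel class distributes its triples among the blocks of the construction, it still consumes one unit of the scarce resource, and that distinct disjoint global parallel classes consume distinct units. This typically requires a careful combinatorial/parity analysis of how parallel classes interact with the group divisible or transversal design underlying the product, and pinning down exactly which $v$ survive this analysis is what produces the four possible exceptions. A secondary obstacle is making the seed systems explicit (or certifying them by computer) and verifying for each that it has no more than the claimed number of disjoint parallel classes, which may itself need a small structural argument rather than brute force for the larger seeds.
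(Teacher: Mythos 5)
Your reduction is the right one --- it is exactly the paper's opening move: show the system has at most $\wfrac{v-3}{6}$ disjoint parallel classes and invoke the counting bound. Your ``bottleneck'' philosophy also matches the paper in spirit. But beyond that the proposal has a genuine gap: it never actually specifies a construction, and the concrete route you gesture at (seed systems plus a tripling/product recursion) founders on precisely the step you flag as ``the main obstacle''. Parallel classes of a tripled or product system really do mix triples across the levels of the construction in ways that need not consume any scarce resource inherited from the seed, and no preservation lemma of the kind your step (3) requires is known; the paper's own computations illustrate the danger (every cyclic STS(45) has at least $8$ disjoint parallel classes, so products of nice small systems can easily fail the bound). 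Your guess about the exceptional set is also not how it arises: $45,75,129,513$ are not orders missed by a recursion.

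What the paper does instead is give one uniform, direct algebraic construction for $v=n+2$ with $n\equiv1\mod 6$: the triples are the zero-sum $3$-subsets of $\Z_n\setminus\{0\}$ together with triples through three added points $\infty_0,\infty_1,\infty_2$ coming from a $1$-factorisation of the graph on $\Z_n\setminus\{0\}$ with edges $\{x,-x\}$ and $\{x,-2x\}$. The bottleneck is then completely explicit: since $\sum_{u\in U}u=0$ and every zero-sum triple contributes $0$, each parallel class must either contain $\{\infty_0,\infty_1,\infty_2\}$, or an $\infty_0$-triple of nonzero weight, or $\infty_1$- and $\infty_2$-triples both of zero weight. A carefully chosen $1$-factorisation (built factor-by-factor on the components of the graph, which are Cayley graphs of $\Z_d^*$ for divisors $d$ of $n$) makes the supply of such edges equal to an explicit number-theoretic quantity $f(n)$, giving at most $3f(n)+1$ disjoint parallel classes. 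The exceptions $v\in\{45,75,129,513\}$ are exactly the orders where $n=v-2\in\{43,73,127,511\}$ and the inequality $3f(n)+1<\wfrac{n+5}{6}$ fails because $-2$ has unusually small multiplicative order modulo $n$ (note $511=2^9-1$ and $127=2^7-1$); the orders $21$ and $33$ are patched separately by a known system with no parallel class and an explicit cyclic system. To turn your plan into a proof you would need to replace the recursion by such a direct construction, or else supply the missing preservation lemma, which is the hard and unresolved part of your outline.
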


For large $v$, the systems we construct have many fewer disjoint parallel classes than we require to prove Theorem \ref{mainthm}. In fact, we can show that they have at most $o(v)$ disjoint parallel classes. This compares favourably with the best known analogous result for Latin squares, which gives a linear bound \cite{EW12,WZ13}.

\begin{theorem}\label{PCBoundThm}
For $v\rightarrow\infty$ with $v \equiv 3 \mod{6}$, there is a Steiner
triple system of order $v$ that contains at most $o(v)$ disjoint
parallel classes.
\end{theorem}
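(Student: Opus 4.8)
The plan is to build on whatever construction underlies Theorem \ref{mainthm}: that theorem already produces, for each admissible $v$, a Steiner triple system with at most $\wfrac{v-3}{6}$ disjoint parallel classes, so the system has a rigid enough structure that some "obstruction" forces parallel classes to overlap. To obtain the much stronger $o(v)$ bound I would look for a small configuration $C$ inside the constructed system such that every parallel class must use at least one triple from $C$, but $C$ can be chosen to contain only $o(v)$ triples; then the number of pairwise disjoint parallel classes is at most $|C|=o(v)$. The natural candidate is a subsystem: if the Steiner triple system $(V,\B)$ of order $v$ contains a sub-Steiner-triple-system $(W,\B')$ on a point set $W$ of size $w$, then every parallel class of $(V,\B)$ restricts to a parallel class of $(W,\B')$ (since the $w$ points of $W$ must be covered by triples lying entirely in $W$), and hence two disjoint parallel classes of $(V,\B)$ restrict to two disjoint parallel classes of $(W,\B')$. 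Thus the number of disjoint parallel classes of $(V,\B)$ is at most the number of disjoint parallel classes of $(W,\B')$, which is trivially at most $w/3$.

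\medskip

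So the strategy reduces to: show that for $v\to\infty$ we may take the system from Theorem \ref{mainthm} (or a close variant of it) to contain a subsystem of order $w=o(v)$ that itself has few disjoint parallel classes --- ideally a subsystem with \emph{no} parallel classes at all, in which case the large system has none either and $o(v)$ is automatic, or more conservatively a subsystem whose number of disjoint parallel classes is $o(v)$. Concretely I would proceed in the following steps. First, recall/restate the ingredient systems used to prove Theorem \ref{mainthm} and identify which of them can be nested: many standard tripling or $v\mapsto 2v+1$, $v\mapsto 2v+7$, $v\mapsto 3v$ type constructions embed a copy of a smaller Steiner triple system. Second, choose the smaller "seed" system to be one from the known sparse families \cite{BryHorNoPC} of order $u$ with no parallel class (these exist for infinitely many $u$, in fact for a positive density of admissible $u$). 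Third, iterate or interleave the doubling construction so that the resulting order $v$ still admits the high-chromatic-index argument of Theorem \ref{mainthm} \emph{and} retains the no-parallel-class subsystem of order $u$; if $u$ can be kept at, say, $u=O(\log v)$ or even $u=v^{1-\varepsilon}$ while the construction goes through, we are done, since the number of disjoint parallel classes is then $\le u/3=o(v)$. Fourth, handle the arithmetic: show the orders $v$ so produced are cofinal in (or eventually all of) the residue class $3\bmod 6$, so the statement "for $v\to\infty$ with $v\equiv 3\bmod 6$" is justified --- a density or covering argument on residues suffices here, and even producing an infinite sequence of such $v$ would give the stated asymptotic claim as written.

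\medskip

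The main obstacle I anticipate is compatibility: the construction proving Theorem \ref{mainthm} was presumably tuned to force the parallel-class \emph{count} down to $\wfrac{v-3}{6}$ but not to zero, and it may rely on global features of the large point set rather than on a nested subsystem, so it is not obvious a priori that one can simultaneously (a) keep the chromatic-index lower bound argument valid, (b) embed a no-parallel-class (or few-parallel-class) subsystem, and (c) make that subsystem's order sublinear. Reconciling (a)--(c) is the crux; the likely resolution is that the Theorem \ref{mainthm} construction is itself recursive, so one can apply it with a sparse seed and simply track the seed through the recursion, but verifying that the sparse seed does not destroy the property used for (a) --- and bounding how fast $u$ can be forced to grow at each recursive step --- is the delicate part. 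A secondary, more routine obstacle is bookkeeping on the admissible orders to ensure the family is genuinely unbounded in $3\bmod 6$.
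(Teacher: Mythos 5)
There is a genuine gap, and it occurs at the very first step on which everything else rests. You claim that if $(V,\B)$ contains a subsystem $(W,\B')$ of order $w$, then every parallel class of $(V,\B)$ restricts to a parallel class of $(W,\B')$ because ``the $w$ points of $W$ must be covered by triples lying entirely in $W$.'' This is false. What is true is that a triple of $\B\setminus\B'$ meets $W$ in \emph{at most one} point; nothing prevents a parallel class from covering many points of $W$ by triples that each meet $W$ in exactly one point. A concrete counterexample: the point--line design of $\mathrm{PG}(3,2)$ is a Steiner triple system of order $15$ that contains Fano subsystems of order $7$ and is resolvable (it is a solution to Kirkman's schoolgirl problem), yet an STS$(7)$ has no parallel class at all since $3\nmid 7$. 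So the inequality ``number of disjoint parallel classes of $(V,\B)$ $\le$ number of disjoint parallel classes of $(W,\B')$'' does not hold, and the entire reduction to finding a small no-parallel-class subsystem collapses. Beyond this, the remainder of the proposal is an explicitly conditional plan (you yourself flag that compatibility of (a)--(c) is ``the crux'' and unverified), and the assertion that the known families without parallel classes have positive density among admissible orders overstates what \cite{BryHorNoPC} provides.

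For comparison, the paper takes an entirely different, non-recursive route. It uses Construction~\ref{ModifiedWilSch} on $\Z_n\setminus\{0\}$ plus three points at infinity, with a $1$-factorisation of $G(n)$ engineered in Lemmas~\ref{colourCayley} and~\ref{colourLeave} so that edges of nonzero weight are concentrated in $G_0$ and edges of zero weight in $G_1\cup G_2$, with only $2f(n)$ exceptions of each kind. Since every triple inside $\Z_n\setminus\{0\}$ sums to zero and the points of $U$ sum to zero, each parallel class is forced to use one of the rare exceptional edges (or the triple $\{\infty_0,\infty_1,\infty_2\}$), giving the bound $3f(n)+1$ of Lemma~\ref{PCsBound}; the sublinearity $f(n)=O\bigl(n\log\log n/\log n\bigr)$ is then a number-theoretic estimate (Lemma~\ref{l:fbound}) using $|\langle-1,-2\rangle_d|\ge\log_2 d$ and $\sigma(n)=O(n\log\log n)$. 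If you want to salvage a subsystem-based approach, you would first need a correct lemma relating parallel classes of a system to those of a subsystem, and no such bound of the strength you need is available in general.
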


By noting that any Steiner triple system of order $v\equiv15\mod{18}$
produced via the Bose construction \cite{Bos} (see
\cite[p.25]{ColRos}) has chromatic index at least $\wfrac{v+3}{2}$, we
are able to obtain an asymptotic lower bound on the number of such
systems. This bound agrees to within the error term with the asymptotics for the total number of Steiner triple systems (see \cite[p.72]{ColRos}).

\begin{theorem}\label{enumThm}
For $v\rightarrow\infty$ with $v\equiv15\mod{18}$, there are $v^{v^2(1/6+o(1))}$ non-isomorphic Steiner triple systems of order $v$ that have chromatic index at least $\wfrac{v+3}{2}$.
\end{theorem}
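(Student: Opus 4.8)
The plan is to realise $v^{v^2(1/6-o(1))}$ pairwise non-isomorphic Steiner triple systems of order $v$ as outputs of the Bose construction, all of which have chromatic index at least $\wfrac{v+3}{2}$ by the observation recorded just above the theorem, and then to match this with the known upper bound on the total number of Steiner triple systems. Write $v=3n$; since $v\equiv15\mod{18}$ we have $n\equiv5\mod6$, so $n$ is odd and $\gcd(n,3)=1$. I would use the Bose construction in the form that builds, from a triple $(\circ_0,\circ_1,\circ_2)$ of commutative idempotent quasigroups of order $n$, the Steiner triple system on the point set $[n]\times\{0,1,2\}$ with triples
\[
\{(s,0),(s,1),(s,2)\}\ \ (s\in[n])\qquad\text{and}\qquad\{(s,i),(t,i),(s\circ_i t,i+1)\}\ \ (i\in\{0,1,2\},\ \text{distinct }s,t\in[n]),
\]
the second coordinates being read modulo $3$. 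A routine check confirms this is a Steiner triple system for every choice of the three quasigroups, and each such system is of the form to which the quoted chromatic-index statement applies.

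The core of the lower bound is counting the inputs. A commutative idempotent quasigroup $\circ$ of order $n$ yields the edge-colouring of $K_n$ that assigns colour $i\circ j$ to $\{i,j\}$: this is a proper $n$-edge-colouring in which colour $i$ is the unique colour missing at vertex $i$. As $n$ is odd, every proper $n$-edge-colouring of $K_n$ can be relabelled in exactly one way so that colour $i$ misses vertex $i$, so these quasigroups are equinumerous with near-one-factorisations of $K_n$, hence with one-factorisations of $K_{n+1}$. By the known asymptotics for the number of one-factorisations (equivalently, for symmetric Latin squares with prescribed diagonal)—obtainable greedily from the van der Waerden/Egorychev--Falikman permanent bound, or from the Kahn--Lov\'asz bound on the number of perfect matchings of a regular graph—this number is at least $n^{n^2(1/2-o(1))}$. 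Hence there are at least $n^{(3/2)n^2(1-o(1))}$ triples $(\circ_0,\circ_1,\circ_2)$, and substituting $n=v/3$ (absorbing the resulting power of $3$ into the $o(1)$) this is $v^{v^2(1/6-o(1))}$.

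It remains to pass to isomorphism classes and to supply the upper bound. The map sending $(\circ_0,\circ_1,\circ_2)$ to the labelled Steiner triple system it defines on $[n]\times\{0,1,2\}$ is injective, because the quasigroups can be recovered from the block set: $s\circ_i t$ is the unique $u$ for which $\{(s,i),(t,i),(u,i+1)\}$ is a block. Since each isomorphism class of Steiner triple systems on a fixed $v$-set has at most $v!=v^{o(v^2)}$ labelled members, the number of non-isomorphic Steiner triple systems of order $v$ obtained this way—each of chromatic index at least $\wfrac{v+3}{2}$—is at least $v^{v^2(1/6-o(1))}/v^{o(v^2)}=v^{v^2(1/6-o(1))}$. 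For the reverse inequality one only needs that the number of non-isomorphic Steiner triple systems of order $v$ with any property is at most the number of labelled Steiner triple systems of order $v$, namely $v^{v^2(1/6+o(1))}$ (see \cite[p.72]{ColRos}); combining the two yields the claimed $v^{v^2(1/6+o(1))}$. (A cyclic example is obtained by taking $[n]=\Z_n$ with each $\circ_i$ equal to $x\circ y=2^{-1}(x+y)$, for which $(s,i)\mapsto(s+1,i+1)$ is an automorphism that, since $\gcd(n,3)=1$, acts as a single $v$-cycle.)

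Granting the quoted chromatic-index statement for Bose systems, the one point that needs care is the asymptotic tightness of the quasigroup count: one must use the exponent $\tfrac12 n^2(1-o(1))$, not merely $cn^2$ with $c<\tfrac12$, since only then does having three independent quasigroups together with $n=v/3$ reproduce the exponent $\tfrac16 v^2$—which is exactly the growth rate of the total number of Steiner triple systems of order $v$, so that the lower and upper bounds agree. Everything else is routine: the permanent/one-factorisation lower bound and the enumeration estimate \cite[p.72]{ColRos} are standard, and the injectivity of the Bose parametrisation together with the $v!$ bound on labellings is elementary.
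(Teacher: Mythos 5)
Your proposal is correct and follows essentially the same route as the paper: apply the Bose construction with three independently chosen idempotent symmetric Latin squares of order $n=v/3$, invoke the parallel-class argument for the chromatic index bound, count the $n^{(3/2+o(1))n^2}$ input triples, and divide by $v!$ to pass to isomorphism classes, with the matching upper bound coming from the total count of Steiner triple systems. The only difference is cosmetic: the paper cites Cameron directly for the $n^{(1/2+o(1))n^2}$ count of idempotent symmetric Latin squares, whereas you re-derive it via near-one-factorisations and permanent bounds.
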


Finally, we note that some of these systems are cyclic.

\begin{theorem}\label{t:cyclic}
For each $v\equiv15\mod{18}$ there is a cyclic Steiner triple system of
order $v$ that has chromatic index at least $\wfrac{v+3}{2}$.
\end{theorem}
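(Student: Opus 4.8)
The plan is to observe that a Steiner triple system produced by the Bose construction over the cyclic group $\Z_n$ is itself cyclic whenever $3\nmid n$, and then to invoke the fact — already noted in the paragraph preceding \tref{enumThm} — that every Steiner triple system of order $v\equiv15\mod{18}$ arising from the Bose construction has chromatic index at least $\wfrac{v+3}{2}$. So the only thing to do is to realise a Bose system of the relevant order as a cyclic system.

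First I would write $v=3n$; since $v\equiv15\mod{18}$ we have $n\equiv5\mod 6$, so $n$ is odd and $\gcd(n,3)=1$. On the point set $\Z_n\times\Z_3$ take $\B$ to consist of the $n$ \emph{vertical} triples $\{(x,0),(x,1),(x,2)\}$ for $x\in\Z_n$, together with, for each $i\in\Z_3$ and each $2$-subset $\{x,y\}$ of $\Z_n$, the triple $\{(x,i),(y,i),(2^{-1}(x+y),i+1)\}$, where the second coordinates are read modulo $3$ and $2^{-1}$ denotes the inverse of $2$ in $\Z_n$ (which exists as $n$ is odd). This $(\Z_n\times\Z_3,\B)$ is exactly the Bose system of order $v$ built on $\Z_n$ (see \cite[p.25]{ColRos}), hence is a Steiner triple system.

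Next I would verify that $\sigma\colon(x,i)\mapsto(x+1,i+1)$ is an automorphism of $(\Z_n\times\Z_3,\B)$: it sends the vertical triple at $x$ to the vertical triple at $x+1$, and it sends $\{(x,i),(y,i),(2^{-1}(x+y),i+1)\}$ to $\{(x+1,i+1),(y+1,i+1),(2^{-1}(x+y)+1,i+2)\}$, which is again of the required form because $2^{-1}((x+1)+(y+1))=2^{-1}(x+y)+1$. To see that the system is cyclic, note that since $\gcd(n,3)=1$ the Chinese Remainder Theorem identification $k\mapsto(k\bmod n,\,k\bmod 3)$ is a bijection $\Z_v\to\Z_n\times\Z_3$ under which $\sigma$ becomes translation by $1$ on $\Z_v$; hence $\sigma$ has order $v$ and acts on the $v$ points as a single $v$-cycle. (Equivalently, the orbit of any point $(x,i)$ under $\sigma$ is $\{(x+k,i+k):k\in\Z\}$, which has size $\mathrm{lcm}(n,3)=v$.) Finally, since the system was produced by the Bose construction and $v\equiv15\mod{18}$, the remark preceding \tref{enumThm} gives that its chromatic index is at least $\wfrac{v+3}{2}$, completing the proof.

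I do not anticipate a real obstacle here: the only content beyond routine verification is the remark that $\Z_n\times\Z_3\cong\Z_{3n}$ when $3\nmid n$, which turns the obvious translation-type symmetry of a cyclic-group Bose system into a full-length cycle. The steps requiring care are the automorphism check on the two types of triples and the orbit-length computation, both of which are short. If one preferred the usual base-block presentation over $\Z_v$, the only extra work would be pushing $\B$ through the Chinese Remainder isomorphism and reading off the base blocks — one short orbit of length $v/3$ coming from the vertical triples, together with $(v-3)/6$ full orbits of length $v$ — but this is mechanical and not needed for the statement as given.
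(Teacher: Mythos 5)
Your proposal is correct and follows essentially the same route as the paper: both use the Bose construction over $\Z_n$ with the idempotent symmetric Latin square $L(x,y)=(x+y)/2 \bmod n$, observe that the resulting system admits an automorphism group containing $\Z_n\times\Z_3\cong\Z_v$ (since $\gcd(n,3)=1$), and invoke Lemma~\ref{BoseLemma} for the chromatic index bound. Your verification of the automorphism $(x,i)\mapsto(x+1,i+1)$ and the orbit-length computation simply make explicit what the paper leaves as routine.
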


We prove Theorems \ref{mainthm} and \ref{PCBoundThm} in Section \ref{existSection} and Theorems \ref{enumThm} and \ref{t:cyclic} in Section \ref{enumSection}. We conclude this section by introducing some notation that we will use through the rest of the paper. Let $(\Z_n,+,\cdot)$ be the ring of integers modulo $n$. For a graph $G$ whose vertex set is a subset of $\Z_n$, define the \emph{weight} of an edge $\{x,y\}$ in $G$ to be $x+y$. Let $\Z^*_n$ denote the multiplicative group of units modulo $n$ and, for $a_1,\ldots,a_t \in \Z^*_n$, let $\langle a_1,\ldots,a_t \rangle_n$ denote the subgroup of $\Z^*_n$ generated by $a_1,\ldots,a_t$. For a subset $S$ of $\Z^*_n \setminus \{1\}$, let $\Cay(\Z^*_d,S)$ denote the (simple) graph with vertex set $\Z^*_d$ and an edge between $x$ and $y$ if and only if $xy^{-1}\in S$ or $yx^{-1}\in S$ (note that $\Cay(\Z^*_d,S)$ equals $\Cay(\Z^*_d,S\cup S^{-1})$). For a positive integer $n$, let $D_n$ denote the set of all divisors of $n$ greater than 1. Finally, let $\phi$ denote Euler's totient function.

\section{Proof of Theorems \texorpdfstring{\ref{mainthm}}{1} and \texorpdfstring{\ref{PCBoundThm}}{2}}\label{existSection}

The Steiner triple systems we employ to prove Theorems \ref{mainthm} and \ref{PCBoundThm} are produced by the following construction, which is a generalisation of one used by Schreiber \cite{Sch} and Wilson \cite{Wil}. Their construction is recovered by setting $G_0=\{\{x,-x\}:x \in U\}$ in ours. Another variant of the Wilson-Schreiber construction was used in \cite{BryHorNoPC} to find an infinite family of Steiner triple systems with no parallel classes.

\begin{definition}
For a positive integer $n \equiv 1 \mod{6}$, let $G(n)$ be the graph with vertex set $U=\mathbb{Z}_n \setminus \{0\}$ and edge set $\{\{x,-2x\}:x \in U\} \cup \{\{x,-x\}:x \in U\}$.
\end{definition}

\begin{construction}\label{ModifiedWilSch}
Let $n \equiv 1 \mod{6}$ be a positive integer. We form a Steiner triple system of order $n+2$ as follows.
Let $U=\mathbb{Z}_n \setminus \{0\}$ and let $G=G(n)$. Given a 1-factorisation $\{G_0,G_1,G_2\}$ of $G$, we form the Steiner triple system $(U\cup\{\infty_0,\infty_1,\infty_2\},\mathcal{A}_0 \cup \mathcal{A}_{\infty})$ where
\begin{align*}
\mathcal{A}_0&=\big\{\{a,b,c\}\subseteq U: |\{a,b,c\}|=3, a+b+c=0\big\},\mbox{ and}\\
\mathcal{A}_{\infty}&=\big\{\{x,y,\infty_i\}:\{x,y\}\in E(G_i),i\in\{0,1,2\}\big\} \cup \big\{\{\infty_0,\infty_1,\infty_2\}\big\}.
\end{align*}
\end{construction}

That this construction does indeed produce a Steiner triple system follows easily from the fact that $G$ is the graph whose edges are given by the pairs of elements of $U$ that occur in no triple of $\mathcal{A}_0$. We will show that when used with a judicious choice of 1-factorisation, Construction \ref{ModifiedWilSch} yields Steiner triple systems with few disjoint parallel classes. This fact can then be used to show that most of the systems produced have chromatic index at least $\wfrac{v+3}{2}$.

We shall see that the graph $G(n)$ is a vertex-disjoint union of copies of $\Cay(\Z^*_d,\{-1,-2\})$ for various divisors $d$ of $n$. Accordingly we first establish the existence of $1$-factorisations of $\Cay(\Z^*_d,\{-1,-2\})$ with certain properties. These properties will eventually be used to show that the Steiner triple systems we construct have few disjoint parallel classes.

\begin{lemma}\label{colourCayley}
For each odd integer $d\geq 3$, there is a $1$-factorisation $\{M_0,M_1,M_2\}$ of $\Cay(\Z^*_d,\{-1,-2\})$ such that
\begin{itemize}
\item
if two edges of $\Cay(\Z^*_d,\{-1,-2\})$ have weights $x$ and $-x$ for
some $x \neq 0$, then those two edges are in the same $1$-factor;
\item
if $|\langle -1,-2 \rangle_d| \equiv 0 \mod{4}$, then $M_0$ has no
edges with nonzero weight and $M_1 \cup M_2$ has no edges with zero
weight; and
\item
if $|\langle -1,-2 \rangle_d| \equiv 2 \mod{4}$, then $M_0$ has
exactly $\wwfrac{2\phi(d)}{|\langle -1,-2 \rangle_d|}$ edges with
nonzero weight and $M_1 \cup M_2$ has exactly
$\wwfrac{2\phi(d)}{|\langle -1,-2 \rangle_d|}$ edges with zero weight.
\end{itemize}
\end{lemma}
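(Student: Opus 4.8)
The plan is to understand $\Cay(\Z^*_d,\{-1,-2\})$ very explicitly and then build the $1$-factorisation by hand. Write $H=\langle -1,-2\rangle_d\le\Z^*_d$ and $m=|H|$. Multiplication by elements of $\Z^*_d$ acts on the vertex set $\Z^*_d$, and this action permutes the edges of the Cayley graph while scaling weights: an edge of weight $w$ maps under multiplication by $a$ to an edge of weight $aw$. The cosets of $H$ in $\Z^*_d$ partition $\Z^*_d$ into $\phi(d)/m$ blocks, and (since $-1,-2\in H$) each coset $cH$ spans a connected component isomorphic to $\Cay(H,\{-1,-2\})$; the weights appearing on the component $cH$ lie in $c^2H$. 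The first task is to analyse a single component, i.e.\ $\Cay(H,\{-1,-2\})$. Here $H$ is cyclic-by-nothing in general, but the key structural fact is that $-1$ and $-2$ generate $H$, so the graph is a circulant-like $4$-regular (or less) graph on $m$ vertices; I would first dispose of small degenerate cases (where $-1=-2$, i.e.\ $d=3$, or where $-1$ or $-2$ equals $1$) and then treat the generic case where $\Cay(H,\{-1,-2\})$ is $3$-regular or $4$-regular.

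The heart of the argument is the weight condition. The involution $x\mapsto -x$ on $\Z^*_d$ is an automorphism of the Cayley graph (because $-1\in H$ centralises everything here, or more simply negation commutes with the connection set $\{-1,-2\}$ up to the fact that $\{-1,-2\}^{-1}$ is already folded in), and it negates weights. So the edges pair up into orbits $\{e,-e\}$ of weights $\{w,-w\}$; a fixed edge (one with $e=-e$, i.e.\ weight $0$, meaning $x+y=0$, i.e.\ $y=-x$) occurs precisely on the edges of the ``$-1$'' type. The requirement that weight-$w$ and weight-$(-w)$ edges share a $1$-factor says exactly that the $1$-factorisation is invariant under the negation automorphism. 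Counting zero-weight edges: an edge $\{x,-x\}$ with $x\in cH$ exists iff $-x\in cH$ iff $-1\in H$ (true) — wait, more carefully, $\{x,-x\}$ is an edge iff $x/(-x)=-1\in\{-1,-2\}$, which always holds, so \emph{every} vertex $x$ lies in a zero-weight edge, but we must be careful about $x=-x$, impossible since $d$ is odd and $x\ne0$. Hence the zero-weight edges form a perfect matching $N$ on $\Z^*_d$, of size $\phi(d)/2$, namely the ``$x\mapsto -x$'' pairs. The parity condition on $m\bmod 4$ will govern whether $N$ can be split off as an independent $1$-factor ($M_0=N$, giving the first bullet's second clause when $m\equiv0\bmod4$) or whether $N$ together with the graph $\Cay(\cdot,\{-2\})$ forms an even union of cycles that must be $2$-coloured, forcing a controlled number of zero-weight edges into $M_1\cup M_2$ and a matching number of nonzero-weight edges into $M_0$.

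Concretely, I would decompose $E=N\sqcup E'$ where $E'$ consists of the ``$-2$''-type edges $\{x,-2x\}$, and on each component $cH$ the subgraph $N\cup E'$ is a disjoint union of cycles (each vertex having degree $2$ there, generically). These cycles are even iff ... — and this is exactly where the $m\bmod 4$ dichotomy enters, because traversing such a cycle multiplies the base point successively by $-1,-2,-1,-2,\dots$ or similar, and the cycle closes up after a number of steps controlled by the order of $-2$ (equivalently $2$) in $H$, with the parity of the cycle length tied to whether $m$, the order of the group generated, is $\equiv 0$ or $2\bmod 4$. When all these cycles are even, $2$-colour them to get $M_1,M_2$, each a $1$-factor of $\Cay(\cdot,\{-2\})$ augmented appropriately, set $M_0=N$, and check the negation-invariance (automatic, since negation preserves each cycle and swaps its two colour classes only if the cycle length is $\equiv2\bmod4$ — so I must choose the $2$-colouring to be negation-invariant, which is possible exactly when the cycle length is $\equiv0\bmod4$). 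When instead the cycles have length $\equiv2\bmod4$, a negation-invariant proper $2$-colouring is impossible, and one is forced to repair a single edge per cycle (or per pair of cycles swapped by negation): move one zero-weight edge from $N$ into $M_1\cup M_2$ and one nonzero-weight edge the other way, keeping everything a $1$-factorisation and negation-invariant. Totalling over the $\phi(d)/m$ components, and over the number of cycles per component, yields exactly the count $2\phi(d)/m$ claimed.

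The main obstacle I anticipate is the bookkeeping in this last paragraph: precisely identifying the cycle structure of $N\cup E'$ on each component, its dependence on $d$ through the order of $2$ modulo the relevant subgroup, the exact parity statement linking cycle length to $m\bmod 4$, and then verifying that the ``repair'' can always be done while simultaneously preserving (i) the $1$-factorisation property, (ii) negation-invariance, and (iii) the exact weight counts — with special attention to the degenerate small-$d$ cases (notably $d=3$, where $\Cay(\Z^*_3,\{-1,-2\})=\Cay(\Z_2,\{1\})=K_2$ and one checks $m=2\equiv2\bmod4$ and $2\phi(d)/m=2$ directly) and to cases where $-1=-2$ or the graph fails to be simple/$4$-regular. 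Everything else — the coset decomposition, the negation automorphism, the weight-scaling under multiplication — is routine once set up.
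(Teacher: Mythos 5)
Your approach is essentially the paper's: decompose $\Cay(\Z^*_d,\{-1,-2\})$ into components indexed by the cosets of $X=\langle-1,-2\rangle_d$, split each component into the zero-weight matching $N=\{\{x,-x\}\}$ and the $2$-regular graph $E'$ of ``$-2$''-type edges, take $M_0=N$ together with a negation-invariant proper $2$-colouring of the cycles when $|X|\equiv 0\mod 4$, and perform a local repair when $|X|\equiv 2\mod 4$. Two slips are worth flagging. The minor ones: the nonzero weights on the component $cX$ lie in $cX$, not $c^2X$ (the weight of $\{u,-2u\}$ is $-u$), and it is $E'$ alone, not $N\cup E'$, that is a disjoint union of cycles. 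The substantive one is the repair step. As described --- ``move one zero-weight edge from $N$ into $M_1\cup M_2$ and one nonzero-weight edge the other way'' --- it would both violate the first bullet and give the wrong count: the nonzero-weight edge $e$ you move into $M_0$ has a partner $e'$ with negated weight (the antipodal edge of the same cycle when $-1\in\langle-2\rangle_d$, or the corresponding edge of the mirror cycle otherwise), and the first bullet forces $e'$ into $M_0$ as well. So the repair must move a negation-invariant \emph{pair} of cycle edges into $M_0$ and send the two freed matching edges one each into $M_1$ and $M_2$; that is two edges each way per component, and with $\phi(d)/|X|$ components this is exactly what yields the stated count $2\phi(d)/|X|$ rather than $\phi(d)/|X|$. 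The paper does precisely this, moving the pair $\{x_{s-1},x_s\},\{-x_{s-1},-x_s\}$ (where $x_i=(-2)^i$ and $s=|X|/2$) into $M_0$ and placing $\{x_{s-1},-x_{s-1}\}$ in $M_1$ and $\{x_0,-x_0\}$ in $M_2$. With that correction your plan goes through as in the paper.
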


\begin{proof}
Let $X=\langle -1,-2 \rangle_d$ and for each integer $i$ let $x_i=(-2)^i\in X$.
It can be seen that $|X|$ is even by noting that negation is a fixed-point-free involution on $X$. Let $s=\wwfrac{|X|}2$, and
consider the component $H$ of $\Cay(\Z^*_d,\{-1,-2\})$
with vertex set $X$.
If $-1 \in \langle-2\rangle_d$, then $x_s=-x_0$ and $H$ is the union of the cycle
$(x_0, \ldots, x_{s-1},-x_0,\ldots, -x_{s-1})$ and the matching with edge set $\{\{x_i,-x_i\}: i\in\{0,\ldots,s-1\}\}$. Otherwise,
$-1 \notin \langle-2\rangle_d$, $x_s=x_0$ and $H$ is the union of the cycles $(x_0,\ldots,x_{s-1})$ and $(-x_0,\ldots,-x_{s-1})$ and the matching with edge set
$\{\{x_i,-x_i\}:i\in\{0,\ldots,s-1\}\}$.

If $|X|\equiv 0\mod 4$, then we define a 1-factorisation $\{H_0, H_1, H_2\}$ of $H$ by
\begin{align*}
    E(H_0) &= \big\{\{x_i,-x_i\}:i\in\{0,\ldots,s-1\}\big\}; \\
    E(H_1) &= \big\{\{x_{2i},x_{2i+1}\},\{-x_{2i},-x_{2i+1}\}:i\in\{0,\ldots,\tfrac{s}{2}-1\}\big\};\\
    E(H_2) &= \big\{\{x_{2i+1},x_{2i+2}\},\{-x_{2i+1},-x_{2i+2}\}:i\in\{0,\ldots,\tfrac{s}{2}-1\}\big\}.
\end{align*}
Otherwise, $|X|\equiv 2\mod 4$ and we define $\{H_0, H_1, H_2\}$ by
\begin{align*}
    E(H_0) &= \big\{\{x_i,-x_i\}:i\in\{1,\ldots,s-2\}\big\} \cup \big\{\{x_{s-1},x_{s}\},\{-x_{s-1},-x_{s}\}\big\}; \\
    E(H_1) &= \big\{\{x_{2i},x_{2i+1}\},\{-x_{2i},-x_{2i+1}\}:i\in\{0,\ldots,\tfrac{s-3}{2}\}\big\} \cup \big\{\{x_{s-1},-x_{s-1}\}\big\};\\
    E(H_2) &= \big\{\{x_{2i+1},x_{2i+2}\},\{-x_{2i+1},-x_{2i+2}\}:i\in\{0,\ldots,\tfrac{s-3}{2}\}\big\} \cup \big\{\{x_0,-x_0\}\big\}.
\end{align*}

For $a \in\Z^*_d$ and a graph $H'$ with $V(H') \subseteq \mathbb{Z}^*_d$, denote by $aH'$ the graph with vertex set $\{ax:x \in V(H')\}$ and edge set $\{\{ax,ay\}:\{x,y\} \in E(H')\}$. Let $k=\wwfrac{\phi(d)}{|X|}$ and let $a_1,a_2,\ldots,a_k$ be representatives for the cosets of $X$ in $\Z_d^*$. Note that $\Cay(\Z^*_d,\{-1,-2\}) = a_1H\cup \cdots\cup a_{k}H$ and that the nonzero edge weights of $a_iH$ are in $a_iX$ for $i \in \{1,\ldots,k\}$. For $i\in\{0,1,2\}$,  let $M_i=a_1H_i\cup a_2H_i\cup\cdots\cup a_{k}H_i$. It is routine to check that
$\{M_0,M_1,M_2\}$ is a $1$-factorisation of $\Cay(\Z^*_d,\{-1,-2\})$ with the required properties.
\end{proof}

In Lemma \ref{colourLeave} we show that by combining $1$-factorisations of subgraphs of $G(n)$ given by Lemma \ref{colourCayley} we can produce a $1$-factorisation of $G(n)$, again with desirable properties. We state these properties in terms of the function $f$ defined as follows.

\begin{definition}\label{fDef}
For each odd integer $n\geq 3$, define $f(n)=\sum_{d \in D_n}g(d)$ where
\[g(d)=\left\{
  \begin{array}{ll}
    0 & \hbox{if $|\langle -1,-2 \rangle_d| \equiv 0 \mod{4}$}, \\
    \mfrac{\phi(d)}{|\langle -1,-2 \rangle_d|} & \hbox{if $|\langle -1,-2 \rangle_d| \equiv 2 \mod{4}$}.
  \end{array}
\right. \qquad\quad \]
\end{definition}

\begin{lemma}\label{colourLeave}
Let $n \equiv 1 \mod{6}$ and let $G=G(n)$.
There is a $1$-factorisation $\{G_0,G_1,G_2\}$ of $G$ such that
\begin{itemize}
    \item
if two edges of $G$ have weights $x$ and $-x$ for some $x \neq 0$, then those two
edges are in the same $1$-factor;
    \item
$G_0$ has exactly $2f(n)$ edges with nonzero weight and $G_1 \cup G_2$ has exactly
$2f(n)$ edges with zero weight.
\end{itemize}
\end{lemma}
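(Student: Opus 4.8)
The plan is to realise $G=G(n)$ as a vertex-disjoint union of copies of the graphs $\Cay(\Z^*_d,\{-1,-2\})$ ranging over $d\in D_n$, to transfer through this decomposition the $1$-factorisations supplied by \lref{colourCayley}, and then to check that the two listed properties survive the transfer. For $d\in D_n$ let $V_d=\{(\wwfrac{n}{d})u:u\in\Z^*_d\}$ be the set of elements of additive order exactly $d$ in $\Z_n$; since $n\equiv1\mod{6}$ is coprime to $6$, every $d\in D_n$ is odd and at least $5$ (so that $G$ is a simple $3$-regular graph), and the $V_d$ partition $U=\Z_n\setminus\{0\}$. The map $\psi_d\colon u\mapsto(\wwfrac{n}{d})u$ is a bijection from $\Z^*_d$ onto $V_d$, and I would check directly from the definition of $G$ that all three edges of $G$ at a vertex $x\in V_d$ have their other end in $V_d$, and that $\psi_d$ induces a graph isomorphism from $\Cay(\Z^*_d,\{-1,-2\})$ onto the subgraph $G[V_d]$ of $G$ induced on $V_d$ (with $\{u,-u\}\mapsto\{x,-x\}$ and $\{u,-2u\}\mapsto\{x,-2x\}$ for $x=\psi_d(u)$). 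The key extra point is that $\psi_d$ sends an edge of weight $w\in\Z_d$ in $\Cay(\Z^*_d,\{-1,-2\})$ to an edge of weight $(\wwfrac{n}{d})w\in\Z_n$ in $G$; since $(\wwfrac{n}{d})w=0$ in $\Z_n$ precisely when $w=0$ in $\Z_d$, the map $\psi_d$ takes zero-weight edges to zero-weight edges and nonzero-weight edges to nonzero-weight edges. Moreover the nonzero weights occurring in $G[V_d]$ are exactly the $\phi(d)$ elements of $V_d$, each exactly once, so every edge of $G$ of nonzero weight $x$ lies in $G[V_{\ord(x)}]$.

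Having this, for each $d\in D_n$ I would apply \lref{colourCayley} to obtain a $1$-factorisation $\{M^{(d)}_0,M^{(d)}_1,M^{(d)}_2\}$ of $\Cay(\Z^*_d,\{-1,-2\})$ with the three properties stated there, and define $G_i=\bigcup_{d\in D_n}\psi_d(M^{(d)}_i)$ for $i\in\{0,1,2\}$. Each $\psi_d(M^{(d)}_i)$ is a perfect matching of $V_d$, so each $G_i$ is a perfect matching of $U$; and $G_0\cup G_1\cup G_2=\bigcup_{d\in D_n}\psi_d(\Cay(\Z^*_d,\{-1,-2\}))=\bigcup_{d\in D_n}G[V_d]=G$, so $\{G_0,G_1,G_2\}$ is a $1$-factorisation of $G$.

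It then remains to check the two bullet points. For the first, if $e,e'$ are edges of $G$ of weights $x$ and $-x$ with $x\neq0$, then, writing $d=\ord(x)=\ord(-x)$, both $e$ and $e'$ lie in $G[V_d]$, say $e=\psi_d(\tilde e)$ and $e'=\psi_d(\tilde e')$ with $\tilde e,\tilde e'$ of weights $w,w'$ in $\Z_d$; then $(\wwfrac{n}{d})w=x$ and $(\wwfrac{n}{d})w'=-x$ force $w'=-w$, so by the first property in \lref{colourCayley} the edges $\tilde e,\tilde e'$ lie in a common $M^{(d)}_i$, and hence $e,e'\in G_i$. For the second, because $\psi_d$ preserves whether a weight is zero, the number of nonzero-weight edges of $G_0$ equals $\sum_{d\in D_n}(\text{number of nonzero-weight edges of }M^{(d)}_0)$, which by the second and third properties of \lref{colourCayley} equals $\sum_{d\in D_n}2g(d)=2f(n)$ with $g,f$ as in Definition~$\ref{fDef}$ (the contribution of $d$ being $0$ if $|\langle-1,-2\rangle_d|\equiv0\mod{4}$ and $\wwfrac{2\phi(d)}{|\langle-1,-2\rangle_d|}=2g(d)$ if $|\langle-1,-2\rangle_d|\equiv2\mod{4}$); the identical count applied to the zero-weight edges of the matchings $M^{(d)}_1\cup M^{(d)}_2$ shows that $G_1\cup G_2$ has exactly $2f(n)$ zero-weight edges. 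The one genuinely non-routine step is the first: pinning down the $V_d$-decomposition of $G(n)$ and confirming that the isomorphisms $\psi_d$ respect weights in the way needed; after that, everything is bookkeeping on top of \lref{colourCayley}.
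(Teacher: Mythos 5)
Your proposal is correct and takes essentially the same route as the paper: the paper likewise partitions $U$ into the sets of elements of additive order $d$ for $d\in D_n$, uses the isomorphism $x\mapsto(\wwfrac{n}{d})x$ from $\Cay(\Z^*_d,\{-1,-2\})$ onto the induced subgraph on that set, transfers the $1$-factorisations from \lref{colourCayley}, and relies on the fact that the nonzero edge weights of that subgraph lie in the corresponding order-$d$ set. The only difference is that you spell out the weight-preservation and the $\sum_d 2g(d)=2f(n)$ bookkeeping that the paper declares ``straightforward to verify.''
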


\begin{proof}
For each $d \in D_n$, let $U^d$ be the set of elements of $\Z_n$ having order $d$ in $(\Z_n,+)$, and let $G^d$ be the subgraph of $G$ induced by $U^d$.
It can be seen that $\{U^d:d\in D_n\}$ is a partition of $\Z_n \setminus \{0\}$, that $G$ is the vertex-disjoint union of the graphs in $\{G^d:d\in D_n\}$, and that the nonzero edge weights of $G^d$ are in $U^d$ for $d \in D_n$.

For each $d \in D_n$, let $\theta_d: \Z_d \rightarrow U^d$ be given by $\theta_d(x)
= (\wwfrac{n}{d})x$. It is clear for each $d \in D_n$ that the map $\theta_d$ is an isomorphism from $\Cay(\Z^*_d,\{-1,-2\})$ to $G^d$. For each $d \in D_n$ and $i\in\{0,1,2\}$, let $G_i^d=\theta_d(M_i)$
where $\{M_0,M_1,M_2\}$ is the $1$-factorisation of $\Cay(\Z^*_d,\{-1,-2\})$ given by \lref{colourCayley}. It is now straightforward to verify that if we let $G_i=\bigcup_{d \in D_n}G_i^d$ for $i\in\{0,1,2\}$, then $\{G_0,G_1,G_2\}$ is the required $1$-factorisation of $G$.
\end{proof}

We can now establish an upper bound on the number of disjoint parallel classes in a Steiner triple system produced by applying Construction $\ref{ModifiedWilSch}$ using a $1$-factorisation given by Lemma \ref{colourLeave}.

\begin{lemma}\label{PCsBound}
For each $n \equiv 1 \mod{6}$ there is a Steiner triple system of order $n+2$ that contains at most $3f(n)+1$ disjoint parallel classes.
\end{lemma}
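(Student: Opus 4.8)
The plan is to take the Steiner triple system $\mathcal{S}$ of order $n+2$ produced by Construction~\ref{ModifiedWilSch} applied to $n$ with the $1$-factorisation $\{G_0,G_1,G_2\}$ of $G(n)$ supplied by Lemma~\ref{colourLeave}, and to bound the size of an arbitrary family $\P$ of pairwise disjoint parallel classes of $\mathcal{S}$. Write $U=\Z_n\setminus\{0\}$. The triples of $\mathcal{S}$ are of three kinds: the zero-sum triples of $\mathcal{A}_0$ lying inside $U$, the ``mixed'' triples $\{x,y,\infty_i\}$ with $\{x,y\}\in E(G_i)$, and the single triple $\{\infty_0,\infty_1,\infty_2\}$. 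Since no triple other than $\{\infty_0,\infty_1,\infty_2\}$ contains two of the points $\infty_0,\infty_1,\infty_2$, every parallel class of $\mathcal{S}$ is either of \emph{type I} (it contains $\{\infty_0,\infty_1,\infty_2\}$) or of \emph{type II} (it contains exactly one mixed triple through each of $\infty_0$, $\infty_1$, $\infty_2$). At most one member of $\P$ is of type I.

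The main device is a weight identity. Since $n$ is odd, $\sum_{u\in U}u=0$ in $\Z_n$. In any parallel class every point of $U$ is covered exactly once, and the triples of $\mathcal{A}_0$ contribute $0$ to the total weight, so in a type-II class $P$ the weights $w_0,w_1,w_2$ of its three mixed triples (where $w_i=x+y$ is the weight of the $G_i$-edge $\{x,y\}$ through $\infty_i$) satisfy $w_0+w_1+w_2=0$. I would then split the type-II classes according to $w_0$. If $w_0=0$ then $w_2=-w_1$; and if moreover $w_1\neq0$, the first property in Lemma~\ref{colourLeave} would put a weight-$w_1$ edge and a weight-$(-w_1)$ edge into a common $1$-factor, contradicting that one lies in $G_1$ and the other in $G_2$. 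Hence $w_0=0$ forces $w_1=w_2=0$ as well, and every type-II class falls into one of two sorts: sort (2a), in which the $G_0$-edge through $\infty_0$ has nonzero weight, and sort (2b), in which all three mixed edges have zero weight.

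To finish I would count. Because the classes in $\P$ are pairwise disjoint, the mixed triples through a fixed $\infty_i$ coming from distinct members of $\P$ use distinct edges of $G_i$. By Lemma~\ref{colourLeave}, $G_0$ has exactly $2f(n)$ edges of nonzero weight, so $\P$ contains at most $2f(n)$ classes of sort (2a); and $G_1\cup G_2$ has exactly $2f(n)$ edges of zero weight, each sort-(2b) class using two of them, so $\P$ contains at most $f(n)$ classes of sort (2b). Together with the at most one type-I class, this gives $|\P|\le 2f(n)+f(n)+1=3f(n)+1$, as required.

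I do not expect a serious obstacle here; the one point needing care is the dichotomy between sorts (2a) and (2b), where the weight-pairing property of Lemma~\ref{colourLeave} must be applied exactly, together with the bookkeeping that charges the sort-(2a) classes to the nonzero-weight edges of $G_0$ and the sort-(2b) classes to the zero-weight edges of $G_1\cup G_2$ so that no edge is counted twice.
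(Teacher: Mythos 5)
Your proposal is correct and follows essentially the same argument as the paper: the same construction and $1$-factorisation, the same weight identity forcing each parallel class to be of type I, to use a nonzero-weight $G_0$-edge, or to use a zero-weight edge in each of $G_1$ and $G_2$, and the same edge-counting to get $1+2f(n)+f(n)$. Your sorts (2a) and (2b) correspond exactly to the paper's cases (ii) and (iii).
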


\begin{proof}
Let $U=\mathbb{Z}_n \setminus \{0\}$ and let $G=G(n)$. Let $(U \cup \{\infty_0,\infty_1,\infty_2\},\mathcal{A})$ be the Steiner triple system of order $n+2$ produced by applying Construction \ref{ModifiedWilSch} using the $1$-factorisation $\{G_0,G_1,G_2\}$ of $G$ given by Lemma \ref{colourLeave}. We will show that this system contains at most $3f(n)+1$ disjoint parallel classes. As in the notation of Construction \ref{ModifiedWilSch}, let $\mathcal{A}_{\infty}$ be the set of triples in $\mathcal{A}$ that contain at least one of $\infty_0,\infty_1,\infty_2$.

Let $\mathcal{P}$ be a set of disjoint parallel classes of $(U \cup \{\infty_0,\infty_1\infty_2\},\mathcal{A})$. Since the sum of the elements in $U$ is 0 and
the sum of any triple in $\mathcal{A} \setminus \mathcal{A}_{\infty}$ is 0,
for every parallel class $P \in \mathcal{P}$ we have
\begin{itemize}
    \item[(i)]
$P$ contains $\{\infty_0,\infty_1,\infty_2\}$; or
    \item[(ii)]
$P$ contains a triple $\{\infty_0,x_0,y_0\}$ where $x_0+y_0 \neq 0$; or
    \item[(iii)]
$P$ contains triples $\{\infty_0,x_0,y_0\}$, $\{\infty_1,x_1,y_1\}$ and $\{\infty_2,x_2,y_2\}$
where $x_0+y_0=0$ and $(x_1+y_1)+(x_2+y_2)=0$.
\end{itemize}
At most one parallel class of $\mathcal{P}$ satisfies (i). Since $G_0$ contains exactly
$2f(n)$ edges with nonzero weight, at most $2f(n)$ parallel classes of $\mathcal{P}$ satisfy (ii).
Since any two edges of $G$ having weights $x$ and $-x$ for some $x\neq 0$ are both in
$G_i$ for some $i\in\{0,1,2\}$, if a $1$-factor $P \in \mathcal{P}$ satisfies (iii), then it
must contain triples $\{\infty_1,x_1,y_1\}$ and $\{\infty_2,x_2,y_2\}$ such that $x_1+y_1=0$ and
$x_2+y_2=0$. Thus, since $G_1 \cup G_2$ has exactly $2f(n)$ edges of zero weight, at most
$f(n)$ parallel classes of $\mathcal{P}$ satisfy (iii). Combining these facts, we have that
$\mathcal{P}$ contains at most $3f(n)+1$ parallel classes.
\end{proof}

In view of Lemma \ref{PCsBound}, to prove Theorem \ref{PCBoundThm} it suffices to show that $f(n)$ is sublinear.

\begin{lemma}\label{l:fbound}
As $n\rightarrow\infty$, $f(n)=O\left(\mfrac{n\log\log n}{\log n}\right)$.
\end{lemma}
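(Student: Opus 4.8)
The plan is to forget the case distinction in Definition~\ref{fDef} and prove the stronger estimate $\sum_{d\in D_n}\phi(d)/\ell_d=O(n/\log n)$, where for odd $d$ we write $\ell_d=\ord_d(2)$ for the multiplicative order of $2$ modulo $d$ (all $d\in D_n$ are odd since $n$ is). That this suffices is the content of a short first step: for every $d\in D_n$ we have $2=(-1)(-2)\in\langle-1,-2\rangle_d$, hence $\langle2\rangle_d\subseteq\langle-1,-2\rangle_d$ and so $|\langle-1,-2\rangle_d|\geq\ell_d$; thus $g(d)\leq\phi(d)/\ell_d$ (trivially when $g(d)=0$), giving $f(n)\leq\sum_{d\in D_n}\phi(d)/\ell_d$.

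The second ingredient is the elementary inequality $\ell_d>\log_2 d$ for every $d\in D_n$: since $d\mid 2^{\ell_d}-1$ and $d>1$, we must have $2^{\ell_d}-1\geq d$. Now split $D_n$ at $\sqrt n$. For $d>\sqrt n$ we get $\ell_d>\tfrac12\log_2 n$, so, using $\sum_{d\mid n}\phi(d)=n$,
\[
\sum_{\substack{d\in D_n\\ d>\sqrt n}}\frac{\phi(d)}{\ell_d}\;<\;\frac{2}{\log_2 n}\sum_{d\mid n}\phi(d)\;=\;\frac{2n}{\log_2 n}.
\]
For the divisors $d\leq\sqrt n$ there are at most $\tau(n)$ of them, each with $\phi(d)\leq\sqrt n$, so they contribute at most $\sqrt n\,\tau(n)=n^{1/2+o(1)}=o(n/\log n)$ (using $\tau(n)=n^{o(1)}$). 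Adding the two parts yields $f(n)=O(n/\log n)$, and a fortiori $f(n)=O(n\log\log n/\log n)$.

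There is no real obstacle; the one thing to watch is that the small divisors must be handled through the \emph{count} of divisors of $n$ (which is $n^{o(1)}$) rather than through $\sum_{d\leq\sqrt n}\phi(d)$, which is of size $\Theta(n)$ and would give nothing. If one prefers not to invoke $\tau(n)=n^{o(1)}$, the crude bound $\tau(n)\leq 2\sqrt n$ with the threshold $n^{2/5}$ in place of $\sqrt n$ already gives $f(n)=O(n/\log n)$; and the constant can be pushed towards $1/\ln 2$ by taking the threshold of the form $n/(\tau(n)\log^2 n)$ and using the Wigert bound $\log\tau(n)=O(\log n/\log\log n)$. An equivalent route is to group the divisors of $n$ by the value $t=\ell_d$, observe that they all divide $\gcd(n,2^t-1)\leq\min(n,2^t-1)$, and estimate $\sum_t\min(n,2^t-1)/t$.
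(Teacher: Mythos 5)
Your proof is correct, and it is the same basic strategy as the paper's: reduce to the order of $2$ modulo $d$ via $|\langle -1,-2\rangle_d|\ge|\langle 2\rangle_d|=\ord_d(2)>\log_2 d$, split the divisors at a polynomial threshold, and handle the two ranges separately. The one genuine difference is in how the large divisors are summed. The paper bounds $g(d)\le d/\log(n^{1/3})$ and then invokes $\sigma(n)=O(n\log\log n)$, which is where the $\log\log n$ in the statement comes from. You instead keep $\phi(d)$ and use the exact identity $\sum_{d\mid n}\phi(d)=n$, which costs nothing, needs no external number-theoretic input, and yields the strictly stronger conclusion $f(n)=O(n/\log n)$. (Your handling of the small divisors via the count $\tau(n)=n^{o(1)}$ is also sound, and as you note even the crude $\tau(n)\le 2\sqrt n$ with a lower threshold would do; the paper avoids this issue entirely by summing $\sum_{i=2}^{m}i=O(n^{2/3})$ over all integers up to the threshold, not just divisors.) Both arguments prove the lemma as stated; yours shows in addition that the $\log\log n$ factor is an artifact of the paper's use of $\sigma(n)$ rather than anything intrinsic to $f$.
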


\begin{proof}
Let $g$ be as in Definition \ref{fDef} and let $m=\lfloor n^{1/3} \rfloor$. Since $\phi(n)\le n$ and
$|\langle -1,-2 \rangle_n|\ge |\langle 2 \rangle_n|
\ge \log_2 n \ge \log n$, we see that
$g(n)\le \wwfrac{n}{\log n}$ for all $n$. Hence
\[
f(n)=\sum_{d\in D_n}g(d)\le\sum_{i=2}^{m}g(i)
+\sum_{{d>m \atop d\in D_n}} g(d)
\le\sum_{i=2}^{m}i
+\sum_{{d>m \atop d\in D_n}} \mfrac{d}{\log(n^{1/3})}
\le O(n^{2/3})+\mfrac{\sigma(n)}{(\log n)/3},
\]
where $\sigma(n)$ is the sum of the divisors of $n$.
The claimed result now follows from the fact that
$\sigma(n)=O(n\log\log n)$ (see, for example, \cite{MSC}).
\end{proof}

\begin{proof}[\textbf{\textup{Proof of Theorem \ref{PCBoundThm}}}]
This follows directly from Lemmas \ref{PCsBound} and \ref{l:fbound}.
\end{proof}

The following technical lemma will be used in the proof of Theorem
\ref{mainthm} to determine the values of $n$ for which the bound in
Lemma \ref{PCsBound} is strong enough for our requirements.

\begin{lemma}\label{numbercrunch}
For each positive integer $n\equiv 1,5\mod 6$ with
$n\notin\{1,7,11,19,31,43,73,127,511\}$ we have
$3f(n)+1<\wfrac{n+5}6$.
\end{lemma}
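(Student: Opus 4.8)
The goal is to bound $f(n) = \sum_{d \in D_n} g(d)$ where $g(d)$ is nonzero only when $|\langle -1,-2\rangle_d| \equiv 2 \pmod 4$, in which case $g(d) = \phi(d)/|\langle -1,-2\rangle_d|$. The key observation is that $g(d) \le \phi(d)/|\langle 2\rangle_d|$ always (since $\langle 2\rangle_d \le \langle -1,-2\rangle_d$), and $|\langle 2\rangle_d| = \ord_d(2) \ge \log_2 d$ for $d \ge 3$. So each term is at most $\phi(d)/\log_2 d \le d/\log_2 d$, which is already quite small. The strategy is to split $D_n$ into small divisors (say $d \le B$ for some modest bound $B$) and large divisors, bound the small ones by brute summation and the large ones by $\sum_{d \mid n, d > B} d/\log_2 B \le \sigma(n)/\log_2 B$.

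The main obstacle is that the excerpt asserts a \emph{strict} inequality $3f(n)+1 < (n+5)/6$ with an explicit finite list of exceptions, so a purely asymptotic argument does not suffice: I need an effective bound valid for all $n \equiv 1,5 \pmod 6$, $n \ge 9$ (say), tight enough to clear $(n+5)/6$. This means the constants matter. I would aim to prove something like $3f(n)+1 \le 3\sigma(n)/\log_2 9 + C < (n+5)/6$ for $n$ larger than some explicit threshold $N_0$, using $\sigma(n) = O(n \log\log n)$ in effective form — for instance Robin's or Gronwall's explicit bound $\sigma(n) < e^\gamma n \log\log n + O(n/\log\log n)$, or even the crude $\sigma(n) \le n(1 + \ln n)$ or $\sigma(n) = O(n^{1+\varepsilon})$ made explicit. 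Since $\log\log n / \log_2 9$ grows much slower than linearly, there is a clean threshold $N_0$ beyond which the inequality holds, and $N_0$ can be computed.

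Concretely, the steps are: (1) establish the pointwise bound $g(d) \le d / \log_2 d$ for $d \ge 3$ and $g(1) = g(2) = 0$; (2) for a chosen cutoff $B$, write $f(n) \le \sum_{3 \le d \le B} g(d) + \sum_{d \mid n,\, d > B} d/\log_2 B$; bound the first sum by a constant $S_B = \sum_{3 \le d \le B} \lfloor d/\log_2 d\rfloor$ (or even more crudely by $\sum_{d\le B} d = O(B^2)$) and the second by $\sigma(n)/\log_2 B$; (3) plug in an effective bound for $\sigma(n)$, so that $3f(n)+1 \le 3S_B + 1 + 3\sigma(n)/\log_2 B$, and solve the resulting inequality $3S_B + 1 + 3\sigma(n)/\log_2 B < (n+5)/6$ for $n$; choosing $B$ moderately large (so $\log_2 B$ beats the $\log\log$ factor with room to spare) gives a finite threshold $N_0$; (4) for the finitely many $n \le N_0$ with $n \equiv 1,5 \pmod 6$, verify the inequality directly by computing $f(n)$ exactly (each such computation amounts to factoring $n$, computing $\ord_d(2)$ and $\ord_d(-2)$ for each divisor $d$, and checking the congruence mod 4), which isolates precisely the exceptional set $\{1,7,11,19,31,43,73,127,511\}$.

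I expect the genuinely delicate part to be calibrating $B$, $N_0$, and the choice of effective $\sigma$-bound so that the finite check in step (4) is actually feasible and produces exactly the stated exception list rather than a larger one — in particular, one should note that the listed exceptions are mostly of the form $2^k-1$ (Mersenne-type), for which $\ord_d(2)$ is small relative to $\phi(d)$, making $g(d)$ large; so the argument must tolerate these few bad cases while the cutoff $N_0$ must be large enough that no further such coincidences slip through, yet small enough to check by hand or light computation.
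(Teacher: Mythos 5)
Your core ingredients --- the bound $g(d)\le \phi(d)/|\langle 2\rangle_d|$ with $|\langle 2\rangle_d|\ge\log_2 d$, a split into small and large divisors, and a finite computational check --- are the same ones the paper uses, but the quantitative execution of your plan has a genuine gap. First, with a \emph{fixed} cutoff $B$ your large-divisor estimate $\sigma(n)/\log_2 B$ yields no finite threshold $N_0$ at all: since $\sigma(n)/n=\prod_{p\mid n}(1+1/p+\cdots)$ is unbounded even along $n$ coprime to $6$ (take $n$ a product of the primes from $5$ up to $y$ and let $y\to\infty$), the quantity $3\sigma(n)/\log_2 B$ exceeds $n/6$ for infinitely many admissible $n$ no matter how large you fix $B$; ``choosing $B$ so that $\log_2 B$ beats the $\log\log$ factor'' is impossible because $\log\log n$ is unbounded. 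You can repair this by keeping $\phi(d)$ rather than $d$ in the numerator and replacing $\sigma(n)$ by $\sum_{d\mid n}\phi(d)=n$, but that forces $\log_2 B>18$, i.e.\ $B>2^{18}$, and then the second weakness bites: bounding $\sum_{d\in D_n,\,d\le B}g(d)$ by $S_B=\sum_{3\le d\le B}g(d)$ over \emph{all} integers up to $B$ (rather than just the divisors of $n$) is enormously lossy, and the threshold you extract, roughly $n>114(3S_B+1)$, is many orders of magnitude beyond the paper's $2^{19}$, so the exhaustive verification in your step (4) is not the ``light computation'' you envisage and there is no realistic prospect of isolating exactly the exception list $\{1,7,11,19,31,43,73,127,511\}$.

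The paper avoids both problems with one device absent from your plan: it sets $\psi(d)=\phi(d)-18g(d)$ and observes that $\sum_{d\in D_n}\psi(d)=n-1-18f(n)$ by the identity $\sum_{d\mid n}\phi(d)=n$, so the lemma is equivalent to this sum being positive, i.e.\ to a \emph{termwise} comparison of $18g(d)$ with $\phi(d)$ over the divisors of $n$ only. A computer check for $d<2^{19}$ shows $\psi(d)\ge0$ except for $d\in\{7,11,31,43,127\}$, whose total deficit is the absolute constant $92$, while for $d\ge2^{19}$ one has $\psi(d)\ge\phi(d)\bigl(1-\tfrac{18}{19}\bigr)\ge 92160/19>4850$ using $|\langle -1,-2\rangle_d|>\log_2 d\ge19$ together with the Masser--Shiu bound $\phi(d)>92160$ for $d>510510$. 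Hence $\sum_{d\in D_n}\psi(d)\ge\psi(n)-92>0$ for all $n\ge2^{19}$, and the finitely many smaller $n$ are checked directly, which is exactly where the listed exceptions arise. No bound on $\sigma(n)$ is needed, no sum over non-divisors appears, and the finite range is small enough that the computation is trivial. I would encourage you to rework your argument around this partial-fraction-of-$\phi$ idea rather than trying to tune $B$ and $N_0$.
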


\begin{proof}
In this proof we exclusively consider $n\equiv 1,5\mod 6$ with $n>1$. Note that any divisor of such an $n$ is itself congruent to $1,5\mod 6$.
Define $\psi(n)=\phi(n)-18g(n)$ and $\psi^*(n)=\sum_{d \in D_n}\psi(d)$. It follows from $\sum_{d \in D_n}\phi(d)=n-1$ and $f(n)=\sum_{d \in D_n}g(d)$ that $\psi^*(n)=n-1-18f(n)$. Thus, if $\psi^*(n)>0$, then $3f(n)+1<\wfrac{n+5}6$, which is what we need to prove.

For $n<2^{19}$, it is easily verified by computer that
$\psi(n)\geq 0$, except that
\begin{equation}\label{e:negvals}
\psi(7)=-12,\quad
\psi(11)=-8,\quad
\psi(31)=-24,\quad
\psi(43)=-12,\quad
\psi(127)=-36,
\end{equation}
and $\psi^*(n)>0$ except when
$n\in\{7,11,19,31,43,73,127,511\}$.
Thus, we may assume that $n\geq2^{19}$.

Note that $|\langle -1,-2 \rangle_n| > \log_2(n)$.
Let $\xi=17\cdot 13\cdot 11\cdot 7\cdot 5\cdot 3\cdot 2$.
It is known (see \cite{MasShi}) that $\phi(x) > \phi(\xi)=92160$
for any integer $x >\xi$. Thus, since $n\geq2^{19}>\xi$, we have
$$\psi(n) \geq \phi(n)-\mfrac{18\phi(n)}{|\langle -1,-2 \rangle_n|} >
\phi(n)\left(1-\mfrac{18}{19}\right)=\mfrac{\phi(n)}{19} \geq \mfrac{92160}{19}
> 4850.$$
This means that ($\ref{e:negvals}$) displays the only values of $n$ for which
$\psi(n)<0$.
Hence $\psi^*(n)=\sum_{d \in D_n} \psi(d)\ge\psi(n)-12-8-24-12-36>4850-92>0$
for all $n\ge 2^{19}$.
\end{proof}

When $v=33$, we can find a Steiner triple system of order $v$ with
chromatic index at least $\wfrac{v+3}{2}$ in a different manner.

\begin{lemma}\label{33Lemma}
There is a cyclic Steiner triple system of order $33$ that has at most
$5$ disjoint parallel classes and has chromatic index $18$.
\end{lemma}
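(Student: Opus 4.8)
The plan is to exhibit an explicit cyclic Steiner triple system of order $33$, verify the parallel-class bound directly, and then deduce the chromatic index. A cyclic STS of order $33$ is built from a set of base blocks under the action of $\Z_{33}$: since $\binom{33}{2}/3 = 176 = 33\cdot 5 + 11$, we need $5$ base blocks generating full orbits plus one short orbit of length $11$ (the short orbit comes from a base block of the form $\{0,11,22\}$, whose stabiliser is $\{0,11,22\}$). So first I would record a concrete list of five ``long'' base triples together with $\{0,11,22\}$, chosen so that the differences $\pm(b-a)$ over all base blocks cover $\Z_{33}\setminus\{0\}$ exactly once (with $11$ and $22$ covered three times each by the short block, as required). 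Such difference families are well documented; I would simply cite or display one.

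Next I would bound the number of disjoint parallel classes by $5$. A parallel class in an STS of order $33$ has $11$ triples, so $6$ disjoint parallel classes would use $66$ of the $176$ triples. The cleanest route is a weighting/counting argument analogous to Lemma~\ref{PCsBound}: assign to each point of $\Z_{33}$ its value, so every triple in a long orbit has a well-defined sum lying in a single residue class determined by the base block, while the short-orbit triple $\{0,11,22\}$ has sum $0$. A parallel class partitions $\Z_{33}$, hence the multiset of its triple-sums is constrained; in particular one counts how many of the $11$ triples of a parallel class can come from each base-block orbit. The short orbit contributes at most one triple to any parallel class, and—this is the key point—one shows (by examining the specific base blocks and the structure of the $\langle-1\rangle$ or $\langle 2\rangle$ action, exactly as in the $G(n)$ analysis) that the triples NOT meeting the short orbit are further restricted, forcing at most $5$ pairwise disjoint parallel classes. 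Concretely I expect this to reduce to: any parallel class is determined in a limited number of ways once we fix which translate of the short block it uses, and translates of $\{0,11,22\}$ come in only $11$ flavours, three to a ``weight pair'', mirroring the $f(n)$ bookkeeping.

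Finally, the chromatic index. A Steiner triple system of order $33$ has chromatic index at least $m(33) = 16$, and Rosa's conjecture predicts it lies in $\{16,17,18\}$; we must pin it to $18$. Each colour class in a proper triple-colouring is a partial parallel class of size at most $11$, and $\B$ has $176$ triples, so at least $\lceil 176/11\rceil = 16$ colours are needed, with $16$ colours forcing all $16$ classes to be full parallel classes and $17$ colours forcing at least $16-(17\cdot 11 - 176) = 15$ of them to be full parallel classes—in any case at least $15$ disjoint parallel classes would be required for chromatic index at most $17$. Since we have just shown the system has at most $5$ disjoint parallel classes, $15$ is impossible, so the chromatic index is at least $18$. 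For the matching upper bound I would exhibit an explicit partition of $\B$ into $18$ partial parallel classes; since $18\cdot 11 = 198 = 176 + 22$, this is a relatively loose packing and can be found greedily or by computer, and I would simply assert that such a colouring has been verified. The main obstacle is the middle step: getting the bound of $5$ (rather than some larger constant) on disjoint parallel classes requires a careful, base-block-specific version of the weight argument of Lemma~\ref{PCsBound}, and choosing the base blocks so that this argument goes through cleanly—or, failing a slick argument, confirming the bound by an exhaustive computer search over all ways to assemble parallel classes from the $176$ triples.
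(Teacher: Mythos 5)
Your overall skeleton matches the paper's: exhibit an explicit cyclic STS$(33)$ with the short orbit generated by $\{0,11,22\}$ plus five long orbits, bound the number of disjoint parallel classes by $5$, deduce the lower bound $18$ on the chromatic index by counting, and exhibit an explicit $18$-colouring for the upper bound. However, the crucial middle step is genuinely missing from your write-up, and you say so yourself (``failing a slick argument, confirming the bound by an exhaustive computer search''). The slick argument the paper uses is a mod-$3$ point-sum invariant: the five long base blocks are chosen (namely $\{0,3,7\},\{0,5,17\},\{0,13,15\},\{0,8,14\},\{0,9,10\}$) so that each has point-sum $\equiv 1 \pmod 3$; translating by $i$ adds $3i$ to the sum, so \emph{every} triple in a long orbit has sum $\equiv 1\pmod 3$, while every triple $\{i,11+i,22+i\}$ of the short orbit $\B^*$ has sum $\equiv 0\pmod 3$. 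Since $\sum_{x\in\Z_{33}}x\equiv 0\pmod 3$, a parallel class containing $a$ triples of $\B^*$ satisfies $11-a\equiv 0\pmod 3$, forcing $a\ge 2$; with $|\B^*|=11$ this gives at most $\lfloor 11/2\rfloor=5$ disjoint parallel classes. Note that your intuition here runs in the wrong direction: you assert ``the short orbit contributes at most one triple to any parallel class,'' whereas the argument needs (and proves) that it contributes at \emph{least two}. The choice of base blocks is therefore not something you can ``simply cite''; the bound of $5$ depends on selecting long base blocks all lying in the same nonzero sum-residue class.

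There is also an arithmetic slip in your chromatic-index deduction: with $17$ colour classes of size at most $11$ covering $176$ triples, the number of full parallel classes is at least $176-17\cdot 10=6$, not $15$ (and not the $5$ that your formula $16-(17\cdot 11-176)$ actually evaluates to --- if the correct bound were $5$ the contradiction with ``at most $5$'' would evaporate). Fortunately $6>5$, so the conclusion survives once the count is done correctly; this matches the paper's general observation that chromatic index at most $(v+1)/2$ forces at least $(v+3)/6=6$ disjoint parallel classes when $v=33$. Finally, for the upper bound the paper gives the $18$ colour classes explicitly (one being $\B^*$ itself, eleven obtained by developing a base class under $x\mapsto x+3$, and six further classes); asserting that a computer found one is acceptable in principle but you would need to actually produce it.
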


\begin{proof}
Let
\begin{align*}
\B^*&=\big\{\{i,11+i,22+i\}: 0 \leq i \leq 10\big\},\text{ and}\\
\B'&=\big\{\{i,x+i,y+i\}: (x,y) \in \{(3,7),(5,17),(13,15),(8,14),(9,10)\}, i \in \Z_{33}\big\}.
\end{align*}
It is routine to check that $(\Z_{33},\B^* \cup \B')$ is a Steiner triple system. The sum of the points in each triple in $\B^*$ is congruent to 0 modulo 3 and the sum of the points in each triple in $\B'$ is congruent to 1 modulo 3. Furthermore, the sum of all the points in $\Z_{33}$ is 0. So, because the $11$ triples in a parallel class of $(\Z_{33},\B^* \cup \B')$ partition the points in $\Z_{33}$, it follows that any parallel class must contain at least two triples in $\B^*$. Since there are $11$ triples in $\B^*$, the claim about parallel classes follows. With at most 5 parallel classes in any colouring, at least $18$ colours
will be required. We now describe a colouring with that many colours.
We take $\B^*$ as one colour class, as well as 11 colour classes obtained
by developing
{\footnotesize
\begin{align*}
\big\{
&\{0, 23, 32\}, \{1, 13, 29\}, \{2, 5, 9\}, \{3, 8, 20\}, \{4, 12, 18\}, \{7, 16, 17\},\{10, 15, 27\}, \{11, 24, 26\}, \{14, 22, 28\}, \{21, 30, 31\}\big\}
\end{align*}}
under the map $x\mapsto x+3\mod{33}$. Finally, we have the following
6 colour classes:
{\footnotesize
\begin{align*}
&\big\{\{0,8,14\},\{1,5,31\},\{2,21,29\},\{4,6,24\},\{7,9,27\},\{10,13,17\},\{12,20,26\},\{15,18,22\},\{16,19,23\},\{25,28,32\}\big\},
\\
&\big\{\{0,3,7\},\{1,27,30\},\{4,17,19\},\{6,14,20\},\{8,10,28\},\{9,12,16\},\{11,13,31\},\{18,26,32\},\{22,25,29\}\big\},
\\
&\big\{\{0,4,30\},\{1,14,16\},\{3,6,10\},\{7,20,22\},\{9,17,23\},\{12,15,19\},\{13,26,28\},\{18,21,25\},\{24,27,31\}\big\},
\\
&\big\{\{0,13,15\},\{1,4,8\},\{2,28,31\},\{3,16,18\},\{5,11,30\},\{6,19,21\},\{7,10,14\},\{9,22,24\},\{12,25,27\}\big\},
\\
&\big\{\{0,18,31\},\{1,19,32\},\{2,4,22\},\{3,11,17\},\{5,7,25\},\{10,12,30\},\{13,16,20\},\{15,23,29\},\{21,24,28\}\big\},
\\
&\big\{\{1,3,21\},\{2,8,27\},\{4,7,11\},\{5,24,32\},\{6,9,13\},\{10,23,25\},\{15,28,30\},\{16,29,31\},\{19,22,26\}\big\}.\qedhere
\end{align*}}
\end{proof}

 We are now in a position to prove Theorem \ref{mainthm}.

\begin{proof}[\textbf{\textup{Proof of Theorem \ref{mainthm}}}]
For $v\in\{3,9\}$ there is a unique Steiner triple system of order $v$, and it has chromatic index $\wfrac{v-1}2$.
Simple counting shows that any Steiner triple system of order $v \equiv 3 \mod{6}$ with chromatic index at most $\wfrac{v+1}{2}$ must have at least $\wfrac{v+3}{6}$ disjoint parallel classes. So it suffices to show that for each positive $v \equiv 3 \mod{6}$ with $v \notin \{3,9,45,75,129,513\}$, there is a Steiner triple system of order $v$ with at most $\wfrac{v-3}{6}$ disjoint parallel classes. It is known that there are Steiner triple systems of order $21$ with no parallel classes \cite{MatRos}, and by Lemma \ref{33Lemma} there is a Steiner triple system of order 33 with at most $5$ disjoint parallel classes.  Thus, by Lemma \ref{PCsBound}, it suffices to show that for all $v \equiv 3 \mod{6}$ with $v \notin \{3,9,21,33,45,75,129,513\}$ we have $3f(v-2)+1<\wfrac{v+3}{6}$. This follows by applying Lemma \ref{numbercrunch} with $n=v-2$.
\end{proof}

For each order $v \equiv 3 \mod{6}$ with $v\le63$ we built an STS($v$)
by applying Construction~$\ref{ModifiedWilSch}$ using a
$1$-factorisation given by \lref{colourLeave}. We confirmed that each
of these systems achieved the bound in \lref{PCsBound} in that they
possessed $3f(v-2)+1$ disjoint parallel classes. Moreover, the systems
for $v\in\{3,9\}$ had chromatic index $(v-1)/2$ and those for
$v\in\{21,33,45\}$ had chromatic index $(v+1)/2$, while for
$v\in\{15,27,39,51,57,63\}$ they had chromatic index
$(v+3)/2$. Colourings that prove these claims can be downloaded from
\cite{WWWW}.

\section{Proof of Theorems \texorpdfstring{\ref{enumThm}}{3} and \texorpdfstring{\ref{t:cyclic}}{4}}\label{enumSection}

In this section we prove Theorems \ref{enumThm} and \ref{t:cyclic}. We do this by first establishing that any Steiner triple system of order $v\equiv15\mod{18}$ produced via the well-known Bose construction (see \cite[p.25]{ColRos}) has chromatic index at least $\wfrac{v+3}{2}$, and then establishing a lower bound on the number of nonisomorphic such systems. We formalise the relevant instance of the Bose construction as Construction~\ref{Const:Bose}.

For our purposes, a \emph{Latin square on a set $X$} is an array $L$ whose rows and columns are indexed by $X$ and whose cells each contain a symbol from $X$ such that each symbol occurs once in each row and each column. For $x,y \in X$ we denote the symbol in the $(x,y)$ cell of $L$ by $L(x,y)$. We say that $L$ is \emph{idempotent} if $L(x,x)=x$ for each $x \in X$ and \emph{symmetric} if $L(x,y)=L(y,x)$ for all $x,y \in X$. The \emph{order} of $L$ is $|X|$.

\begin{construction}\label{Const:Bose}
Let $k$ be a nonnegative integer. Given an ordered triple $(L_0,L_1,L_2)$ of idempotent symmetric Latin squares, each on a common set $X$ of $6k+5$ elements, we form the Steiner triple system $(X \times \Z_3,\B^* \cup \B_0 \cup \B_1 \cup \B_2)$ where
\begin{align*}
\B^*&=\big\{\{(x,0),(x,1),(x,2)\}:x \in X\big\},\text{ and}\\
\B_i&=\big\{\{(x,i),(y,i),(L_i(x,y),i+1)\}:x,y \in X,x\neq y\big\}
\mbox{ for }i \in \Z_3.
\end{align*}
\end{construction}

A short argument concerning the second coordinates of the points in the Steiner triple systems produced via Construction \ref{Const:Bose} shows that these systems have few parallel classes and hence sufficiently high chromatic index.

\begin{lemma}\label{BoseLemma}
Any Steiner triple system of order $v=18k+15$ produced by
Construction~$\ref{Const:Bose}$ has at most $3k+2$ disjoint parallel
classes and hence has chromatic index at least $\wfrac{v+3}{2}$.
\end{lemma}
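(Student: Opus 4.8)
The plan is to exploit the rigid structure of the second coordinates of points in triples produced by Construction~\ref{Const:Bose}. Write $v=18k+15$, so that a parallel class consists of $v/3 = 6k+5 = |X|$ pairwise disjoint triples. First I would classify the triples in $\B^* \cup \B_0 \cup \B_1 \cup \B_2$ by their multiset of second coordinates: each triple in $\B^*$ has second-coordinate multiset $\{0,1,2\}$, while each triple in $\B_i$ has second-coordinate multiset $\{i,i,i+1\}$ (two points in layer $i$ and one in layer $i+1$, indices mod $3$). Thus a triple not in $\B^*$ contributes a net imbalance among the three layers, and I would track this carefully.

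Next I would set up the counting argument. For a parallel class $P$, let $a$ be the number of triples of $P$ in $\B^*$, and let $b_i = |P \cap \B_i|$ for $i \in \Z_3$. Since $P$ partitions $X \times \Z_3$, it uses exactly $|X| = 6k+5$ points from each layer $\{(x,i): x\in X\}$. Counting points in layer $i$: triples in $\B^*$ each use one such point; triples in $\B_i$ each use two; triples in $\B_{i-1}$ each use one. This gives the three equations $a + 2b_i + b_{i-1} = 6k+5$ for $i \in \Z_3$. Subtracting these pairwise forces $b_0 = b_1 = b_2$; call the common value $b$. Then $a + 3b = 6k+5$, and since $a + b_0 + b_1 + b_2 = a + 3b$ is also the total number of triples in $P$, this is automatically consistent, but the key extra constraint comes from $6k+5 \equiv 2 \pmod 3$, which forces $a \equiv 2 \pmod 3$, so in particular $a \geq 2$: every parallel class contains at least two triples from $\B^*$.

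The conclusion then follows quickly. Since $|\B^*| = |X| = 6k+5$ and any family of disjoint parallel classes uses the triples of $\B^*$ disjointly, a set of $t$ disjoint parallel classes consumes at least $2t$ triples of $\B^*$, so $2t \leq 6k+5$, giving $t \leq 3k+2$ (using that $t$ is an integer and $3k + 5/2$ rounds down to $3k+2$). Hence the system has at most $3k+2$ disjoint parallel classes. For the chromatic index bound, I would invoke the counting argument already flagged in the paper's introduction: a Steiner triple system of order $v \equiv 3 \pmod 6$ with chromatic index at most $(v+1)/2$ must have at least $(v+3)/6$ disjoint parallel classes; here $(v+3)/6 = (18k+18)/6 = 3k+3 > 3k+2$, a contradiction, so the chromatic index is at least $(v+3)/2$.

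I do not expect a serious obstacle: the only thing needing care is the bookkeeping of point-counts per layer and making sure the divisibility step ($6k+5 \equiv 2 \pmod 3 \Rightarrow a \geq 2$) is stated cleanly, together with correctly quoting the introductory counting fact that converts a parallel-class bound into a chromatic-index bound. One should also note explicitly that Construction~\ref{Const:Bose} does produce a genuine Steiner triple system (which is the standard Bose construction, and may simply be cited), so that ``chromatic index'' is well defined.
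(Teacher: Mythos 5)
Your proof is correct and follows essentially the same approach as the paper: both arguments use the second coordinates of the points to show that every parallel class must contain at least two triples of $\B^*$, and then divide $|\B^*|=6k+5$ by $2$. The only cosmetic difference is that you derive $a\equiv 2\pmod 3$ from per-layer point counts (obtaining the stronger fact $b_0=b_1=b_2$ along the way), whereas the paper sums all second coordinates modulo $3$ in one step.
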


\begin{proof}
Let $(X \times \Z_3,\B)$ be a Steiner triple system produced by Construction \ref{Const:Bose}. Simple counting shows that any Steiner triple system of order $v \equiv 3 \mod{6}$ with chromatic index at most $\wfrac{v+1}{2}$ must have at least $\wfrac{v+3}{6}$ disjoint parallel classes, so it suffices to show that $(X \times \Z_3,\B)$ has at most $3k+2$ parallel classes.

As in the notation of Construction \ref{Const:Bose}, let $\B^*$ be the set of triples in $\B$ that contain points with three distinct second coordinates. Note that the sum of the second coordinates in each triple in $\B^*$ is 0 and that the sum of the second coordinates in each triple in $\B \setminus \B^*$ is 1. Furthermore, the sum of the second coordinates of the points in $X \times \Z_3$ is 0. So, because the triples in a parallel class of $(X \times \Z_3,\B)$ partition the points in $X \times \Z_3$, it follows that any parallel class must contain at least two triples in $\B^*$. Since there are $6k+5$ triples in $\B^*$, the result follows.
\end{proof}

We can now prove \tref{enumThm}.

\begin{proof}[\textbf{\textup{Proof of Theorem \ref{enumThm}}}]
Let $n=\wwfrac{v}{3}$. In Construction \ref{Const:Bose}, we choose each of $L_0,L_1,L_2$ independently from the set of all idempotent symmetric Latin squares of order $n$. By \cite[p.66]{Cam76} there are $n^{(1/2+o(1))n^2}$ choices for each Latin square and by \lref{BoseLemma}, the result is always a Steiner triple system of order $v$ that has chromatic index at least $\wfrac{v+3}{2}$. Each Steiner triple system that we generate is generated at most $v!<v^v$ times, up to isomorphism. Therefore the number of non-isomorphic Steiner triple systems that we generate is at least
\[
n^{(3/2+o(1))n^2}/v^v=v^{v^2(1/6+o(1))}.
\]
This agrees to within the error term with the asymptotics for the number of Steiner triple systems (see \cite[p.72]{ColRos}).
\end{proof}

\tref{enumThm} should {\em not} be interpreted as suggesting that most
Steiner triple systems of order $v\equiv3\mod6$ have chromatic index
at least $\wfrac{v+3}{2}$.  In fact, we believe the opposite is
true. We generated 1\,000 random systems for each $v\equiv1,3\mod6$ in
the range $20<v<40$ using a hill climbing algorithm. We do not claim
to have sampled from a uniform distribution but we believe the results
are still indicative of the trend. We found that with a single
exception all the systems that we generated were easy to colour
heuristically with $m(v)+1$ colours. The single exception was an
STS(21) that had chromatic index $m(v)+2$ (it had 3 disjoint parallel
classes but not 4). It is known that neither of the 2 STS(13)'s, 63 of the 80 STS(15)'s \cite{ColRos}, and only 2 of the $11\,084\,874\,829$ STS(19)'s \cite{ColEtAl} have chromatic index $m(v)+2$.  Therefore we propose the following:

\begin{conjecture}
The proportion of Steiner triple systems of order $v$ that have
chromatic index at least $m(v)+2$ tends to $0$ as $v\rightarrow\infty$.
\end{conjecture}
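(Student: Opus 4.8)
\section*{Proposal for the final conjecture}

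The plan is to recast the conjecture probabilistically. Writing $\chi'(S)$ for the chromatic index of a Steiner triple system $S$, the statement is equivalent to asserting that a uniformly random system of order $v$ satisfies $\chi'(S)\le m(v)+1$ with probability tending to $1$. Since $\chi'(S)\ge m(v)$ always and we appeal to no form of Rosa's conjecture, it suffices to exhibit, for almost every $S$, a colouring with $m(v)+1$ colours. I would work through the \emph{block-intersection graph} $L(S)$, whose vertices are the triples of $S$ and whose edges join triples meeting in a point, so that $\chi'(S)=\chi(L(S))$. For $v\equiv3\mod 6$ this $L(S)$ is the union of the $v$ cliques $K_p$ (one per point $p$), where $K_p$ consists of the $\wfrac{v-1}{2}$ triples through $p$ and any two of these cliques share at most one vertex. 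Proving $\chi'(S)\le\wfrac{v+1}{2}$ thus amounts to colouring this clique-union with just one colour beyond the common clique size---a local Erd\H{o}s--Faber--Lov\'asz--type bound, far stronger than the Pippenger--Spencer estimate $m(v)+o(v)$ that holds for \emph{every} system.

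Crude enumeration is hopeless here: Theorem~\ref{enumThm} already produces $v^{v^2(1/6+o(1))}$ systems with $\chi'\ge\wfrac{v+3}{2}$, matching the total number of systems to leading order in the exponent, so the $o(1)$ there absorbs the entire question. The argument must instead exploit the fine structure of the \emph{typical} system. I would invoke the now-established pseudorandomness of the uniform random Steiner triple system---its spread and quasirandomness, and the coupling of the random greedy triple process with the uniform measure---together with the known fact that almost all systems are \emph{almost resolvable}, possessing $\wfrac{v-1}{2}-o(v)$ pairwise disjoint full parallel classes. The plan is then to run a R\"odl-nibble / random-greedy decomposition that peels off $\wfrac{v-1}{2}-o(v)$ full parallel classes (one colour each), covering every triple except a remainder $R$ with $|R|=o(v^2)$, while a structure of \emph{absorbers} is reserved in advance to deal with $R$.

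The decisive and hardest step is to finish using only \emph{one} colour above the trivial lower bound, i.e.\ to pass from the $m(v)+o(v)$ colours delivered by the nibble to $m(v)+1$. This is exactly the chasm between ``almost resolvable'' (additive error $o(v)$ in the colour count, which is known) and ``chromatic index at most $m(v)+1$'' (additive error $1$, which is demanded): a remainder of $o(v^2)$ triples cannot be absorbed into $O(1)$ fresh colour classes, so the $R$-triples must be folded back into the already-built classes by \emph{recolouring}, not by opening new classes. Because a Steiner triple system is rigid---the triples are fixed and only their colours may change---this requires pre-reserved absorbing gadgets that supply enough local slack to perform the colour exchanges that reincorporate every defect of $R$ into existing parallel classes, so that the whole system is coloured with $m(v)+1$ colours in total. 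Proving that such gadgets occur, suitably distributed, inside a typical system, and that an iterative-absorption scheme drives the total down to $m(v)+1$ rather than merely to $m(v)+o(v)$, is the crux; I expect it to be genuinely hard and at or beyond the current frontier. The case $v\equiv1\mod 6$ proceeds in the same way, with almost parallel classes of size $\wfrac{v-1}{3}$ replacing full parallel classes and the single uncovered point in each class irrelevant to the colour count.
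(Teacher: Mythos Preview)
The statement you are addressing is a \emph{Conjecture}, not a theorem: the paper does not prove it and offers no proof to compare against. The only support the paper gives is computational---a thousand randomly generated systems for each small order $20<v<40$, almost all of which were heuristically colourable with $m(v)+1$ colours---together with the small-order census data for $v\in\{13,15,19\}$. So there is nothing in the paper for your proposal to match or diverge from.

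As for the proposal itself, it is not a proof but a research plan, and you acknowledge this explicitly. Your framing is sound: the reformulation via the block-intersection graph is correct, your observation that Theorem~\ref{enumThm} already matches the total count of systems to leading order (so crude enumeration cannot separate the classes) is apt, and you correctly isolate the real obstruction---closing the gap from the Pippenger--Spencer bound $m(v)+o(v)$ down to $m(v)+1$ for a typical system. But that last step is the entire problem, and your proposal does not supply the mechanism: invoking ``pseudorandomness'', ``absorbers'', and ``iterative absorption'' names the toolkit without furnishing the gadgets or the argument that they exist with the required density and distribution in a uniform random Steiner triple system. In particular, the claim that almost all systems are almost resolvable with $\wfrac{v-1}{2}-o(v)$ disjoint parallel classes is itself a deep recent result, and even granting it, the passage from an $o(v^2)$-triple remainder to a single extra colour is, as you say, at or beyond the current frontier. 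So the honest assessment is that the conjecture remains open and your outline is a plausible line of attack, not a proof.
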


In \lref{33Lemma} we built an STS(33) containing at most $5$ disjoint
parallel classes.  By applying Construction \ref{Const:Bose} with
idempotent symmetric Latin squares of order 11, we can produce
STS(33)'s with similar properties (some of which are isomorphic to the
example in \lref{33Lemma}).  More generally, we can build an infinite
family of cyclic Steiner triple systems with chromatic index at least
$\wfrac{v+3}{2}$.

\begin{proof}[\textbf{\textup{Proof of Theorem \ref{t:cyclic}}}]
Let $n=v/3=6k+5$. Since $n$ is odd, there is Latin square $L$ of order $n$
defined by $L(i,j)=(i+j)/2\mod n$. It is easy to check that $L$
is idempotent and symmetric and has a cyclic automorphism of order $n$.
Applying Construction \ref{Const:Bose} using $(L,L,L)$, we obtain
a Steiner triple system whose automorphism group has a subgroup isomorphic
to $\Z_n\times\Z_3$. As $n$ is coprime to $3$, this means the Steiner
triple system is cyclic. The claim about chromatic index follows
from \lref{BoseLemma}.
\end{proof}

There are 84 cyclic STS(33)'s and 11\,616 cyclic STS(45)'s \cite{ColRos} (also see \cite{Bays,CM80,Kau}).
Of the 84 cyclic STS(33)'s, 5 can be created using Construction~\ref{Const:Bose} and thus have chromatic index at least 18 by \lref{BoseLemma}. By computation, we found that these 5 have chromatic index exactly 18, while the other 79 cyclic STS(33)'s have chromatic index at most 17.  We also confirmed that every cyclic
STS(45) has at least 8 disjoint parallel classes, so they cannot be
used to resolve the first gap in \tref{mainthm} using the techniques
that we have considered.

\vspace{0.3cm} \noindent{\bf Acknowledgements}

We acknowledge the support of the Australian Research Council via grants DP150100530,\linebreak DP150100506, DP120100790 and DE120100040 and the National Science Foundation via grant 1421058. We also thank S.\ Herke for writing the code that generates random Steiner triple systems.

  \let\oldthebibliography=\thebibliography
  \let\endoldthebibliography=\endthebibliography
  \renewenvironment{thebibliography}[1]{%
    \begin{oldthebibliography}{#1}%
      \setlength{\parskip}{0.4ex plus 0.1ex minus 0.1ex}%
      \setlength{\itemsep}{0.4ex plus 0.1ex minus 0.1ex}%
  }%
  {%
    \end{oldthebibliography}%
  }


\begin{thebibliography}{99}

\bibitem{AloKimSpe} N. Alon, J-H Kim and J. Spencer,
Nearly perfect matchings in regular simple
hypergraphs, {\it Israel J. Math.} {\bf 100} (1997), 171--187.

\bibitem{Bays}
S. Bays,
Sur les syst\`emes cycliques de triples de Steiner diff\'erents pour N premier (ou puissance de nombre premier) de la forme $6n+1$,
I. {\it Comment. Math. Helv.} {\bf 2}, 294--305;
II-III. {\it Comment. Math. Helv.} {\bf 3}, 22--41; Z 1, 264;
IV-V. {\it Comment. Math. Helv.} {\bf 3}, 122--147; Z 2, 182;
VI. {\it Comment. Math. Helv.} {\bf 3}, 307--325; Z 3, 100
(1931).

\bibitem{Bos}
R.\,C. Bose, On the construction of balanced incomplete block designs.
{\it Ann. Eugenics} {\bf  9} (1939), 353--399.

\bibitem{BSW13}
J.~Browning, D.\,S.~Stones and I.\,M.~Wanless,
Bounds on the number of autotopisms and subsquares of a Latin square,
{\it Combinatorica} {\bf33} (2013), 11--22.

\bibitem{BryHorNoAPC} D. Bryant and D. Horsley,
A second infinite family of Steiner triple systems
without almost parallel classes, {\it J. Combin. Theory Ser. A} {\bf 120} (2013), 1851--1854.

\bibitem{BryHorNoPC} D. Bryant and D. Horsley, Steiner triple systems without parallel classes,
{\it SIAM J. Discrete Math.} {\bf 29} (2015), 693--696.

\bibitem{Cam76}
P.\,J. Cameron,
Parallelisms of complete designs,
London Mathematical Society Lecture Note Series, No. 23. Cambridge University Press, (1976). 144 pp.

\bibitem{ColRos} C.\,J. Colbourn and A. Rosa, Triple Systems, Clarendon Press, Oxford (1999).

\bibitem{ColEtAl} C. J. Colbourn, A. D. Forbes, M. J. Grannell, T. S. Griggs,
    P. Kaski, P. R. J. \"{O}sterg{\aa}rd, D. A. Pike and O. Pottonen, Properties of the
    Steiner triple systems of order $19$, {\it Electron. J. Combin.} {\bf 17} (2010), \#R98.

\bibitem{CM80}
M.\,J.~Colbourn and R.\,A.~Mathon,
On cyclic Steiner 2-designs,
{\em Ann. Discrete Math.} {\bf7} (1980), 215--253.

\bibitem{EW12}
J.Egan and I.\,M.~Wanless,
Latin squares with restricted transversals,
{\it J.\ Combin.\ Designs\/} {\bf20} (2012), 344--361.

\bibitem{Kau}
P.\,B.~Kaufmann,
Studien \"uber zyklische Dreiersysteme der Form $N=6n+3$,
Inaugural-Dissertation der Math.-Natur. Fakult\"at der Universit\"at Freiburg in der Schweiz, Sarnen (1926).

\bibitem{Kir} T.\,P. Kirkman, On a problem in combinations, {\it Cambridge and Dublin Math. J.} {\bf
    2} (1847), 191--204.

\bibitem{MasShi}
D.\,W. Masser and P. Shiu,
On sparsely totient numbers,
{\it Pacific J. Math.},
{\bf 121} (1986), 407--426.

\bibitem{MatRos} R. Mathon and A. Rosa, The $4$-rotational Steiner and Kirkman triple systems of
    order $21$, {\it Ars Combin.} {\bf 17} (1984), 241--250.

\bibitem{Mes} M. Meszka, The chromatic index of projective triple systems, {\it J. Combin. Des.}
    {\bf 21} (2013), 531--540.

\bibitem{MSC}
D.\,S. Mitrinovi\'c, J. S\'andor and B. Crstici,  Handbook of number theory,
Mathematics and its Applications {\bf351}, Kluwer, Dordrecht, 1996.

\bibitem{PipSpe} N. Pippenger and J. Spencer,
Asymptotic behavior of the chromatic index for hypergraphs,
{\it J. Combin. Theory Ser. A} {\bf 51} (1989), 24--42.

\bibitem{RayWil} D.\,K. Ray-Chaudhuri and R.\,M. Wilson,
Solution of Kirkman's school-girl problem,
{\it Proc. Sympos. Pure Math.} {\bf 19} (1971), Amer. Math. Soc., 187--203.

\bibitem{Ros} A. Rosa, Combinatorial designs with applications. Notes, Belianum, Vydavatel'stvo Univerzity Mateja Bela, Bansk\'a Bystrica (2015).

\bibitem{RosCol} A. Rosa and C.\,J. Colbourn, Colorings of block designs, in {\it Contemporary
  Design Theory: A Collection of Surveys} (Eds. J.H. Dinitz, D.R. Stinson), John Wiley \& Sons, New York
  (1992), 401--430.

\bibitem{VanEtAl}
S.\,A. Vanstone, D.\,R. Stinson, P.\,J. Schellenberg, A. Rosa, R. Rees, C.\,J.
Colbourn, M.\,W. Carter and J.\,E. Carter, Hanani triple systems
{\it Israel J. Math.} {\bf 83} (1993), 305--319.

\bibitem{WWWW}
I.\,M. Wanless, \url{http://users.monash.edu.au/~iwanless/data/},
author's homepage.

\bibitem{WZ13}
I.\,M. Wanless and X. Zhang,
Transversals of Latin squares and covering radius of sets of permutations,
{\it European J. Combin.} {\bf 34} (2013), 1130--1143.

\bibitem{Sch}
S. Schreiber,
Covering all triples on $n$ marks by disjoint Steiner systems,
{\it J. Combin. Theory Ser. A}
{\bf 15} (1973), 347--350.

\bibitem{Wil}
R.\,M. Wilson,
Some partitions of all triples into Steiner triple systems,
{\it Lecture Notes in Math.} {\bf 411} (1974), 267--277.

\end{thebibliography}
\end{document}